 \numberwithin{equation}{section}
\theoremstyle{plain}
\newtheorem{thm}{Theorem}[section]
\newtheorem{cor}[thm]{Corollary}
\newtheorem{lem}[thm]{Lemma}
\newtheorem{prop}[thm]{Proposition}
\theoremstyle{definition}
\newtheorem{ex}[thm]{Example}
\theoremstyle{remark}
\newtheorem{rem}[thm]{Remark}
\newcommand{\N}{\mathbb{N}}
\newcommand{\R}{\mathbb{R}}
\newcommand{\es}{\mathbb{S}}
\newcommand{\bp}{\begin{proof}[\ensuremath{\mathbf{Proof}}]}
\newcommand{\bs}{\begin{proof}[\ensuremath{\mathbf{Solution}}]}
\newcommand{\ep}{\end{proof}}
\newcommand{\be}{\begin{equation}}
\newcommand{\ee}{\end{equation}}
\begin{document}

\title{The Blaschke--Lebesgue theorem revisited}

\author{Ryan Hynd\footnote{Department of Mathematics, University of Pennsylvania. This work was supported in part by an American Mathematical Society Claytor--Gilmer fellowship. }}
\maketitle

\section{Introduction}
A convex and compact subset of the Euclidean plane is a {\it constant width shape} provided the distance between two parallel supporting lines is the same in all directions. For convenience, we will always assume this distance is equal to one.  In addition, we will say that the boundary of a constant width shape is a {\it constant width curve} and refer to constant width shapes and their boundary curves interchangeably. A simple example of a constant width curve is a circle of radius $1/2$.  Indeed, any two parallel supporting lines touch the circle at the ends of a diametric chord.  As we shall see below, there is a plethora of constant width curves.
\begin{figure}[h]
     \centering
         \includegraphics[width=.6\columnwidth]{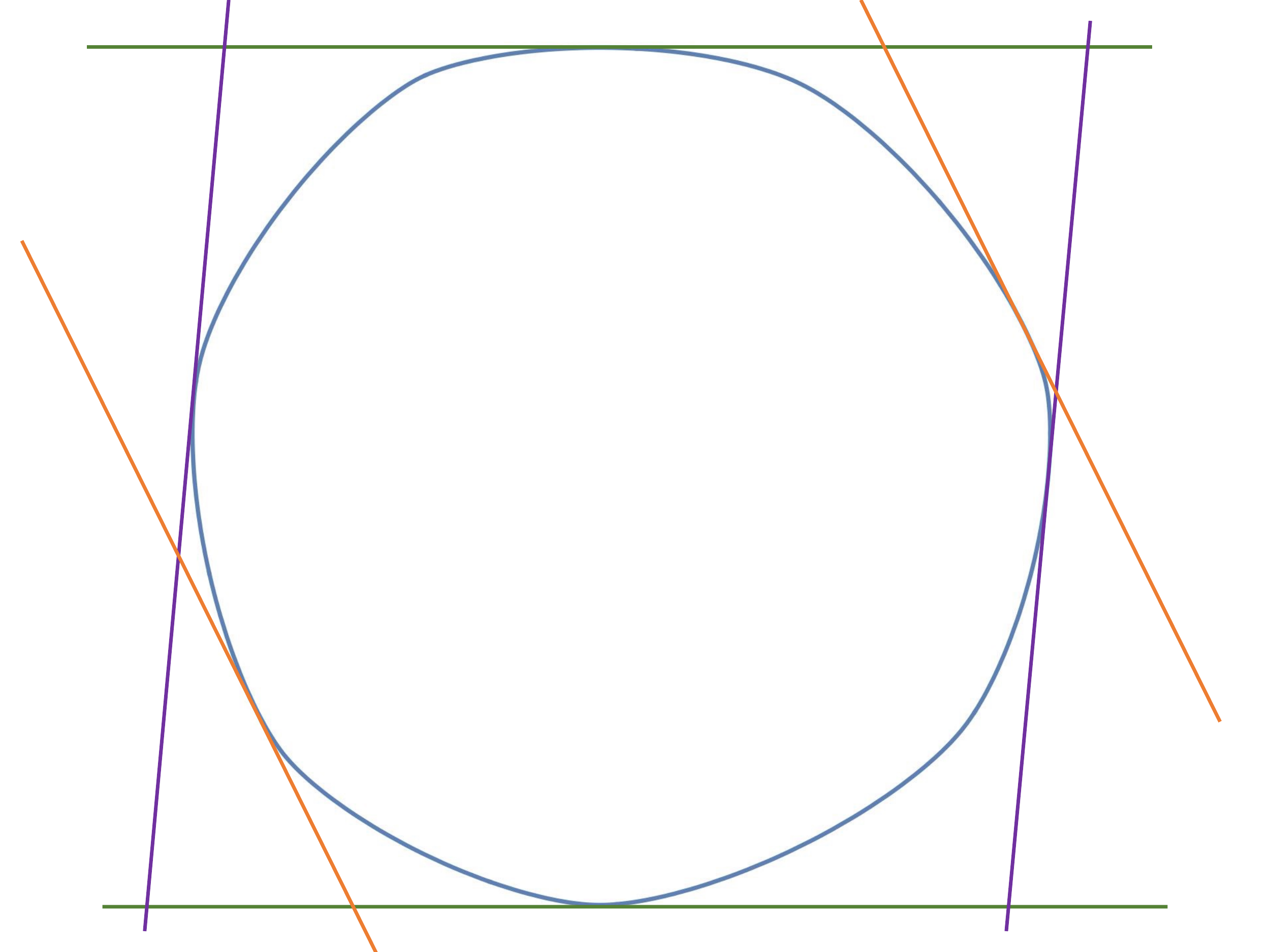}
                   \caption{A constant width curve with three pairs of parallel supporting lines.}
                   \label{FirstFig}
\end{figure}
\par  In what follows, an important example of a constant width curve is a Reuleaux triangle.  This shape is obtained as the intersection of three closed disks of radius one which are centered at the vertices of an equilateral triangle of side length one.  In order to check that this shape has constant width, we only need to make the following observation. For any pair of parallel supporting lines, one touches a vertex of the associated equilateral triangle and the other touches a point on the circle of radius one centered at this vertex. As a result, the distance between these lines is necessarily equal to one. More generally, a Reuleaux polygon is a curve of constant width consisting of finitely many arcs of circles of radius one. We note that the constant width condition necessitates that these shapes have an odd number of sides.  
\begin{figure}[h]
     \centering
     \begin{subfigure}[b]{0.37\textwidth}
         \centering
         \includegraphics[width=\textwidth]{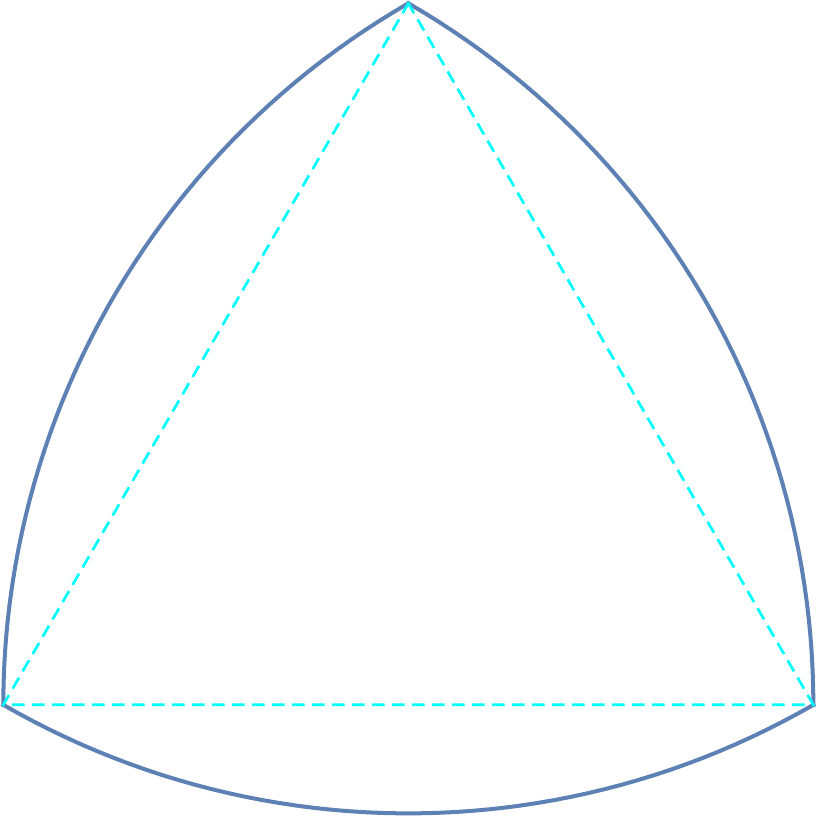}
                   \caption{A Reuleaux triangle with inscribed equilateral triangle.}
     \end{subfigure}
    \hspace{.2in}
     \begin{subfigure}[b]{0.54\textwidth}
         \centering
         \includegraphics[width=\textwidth]{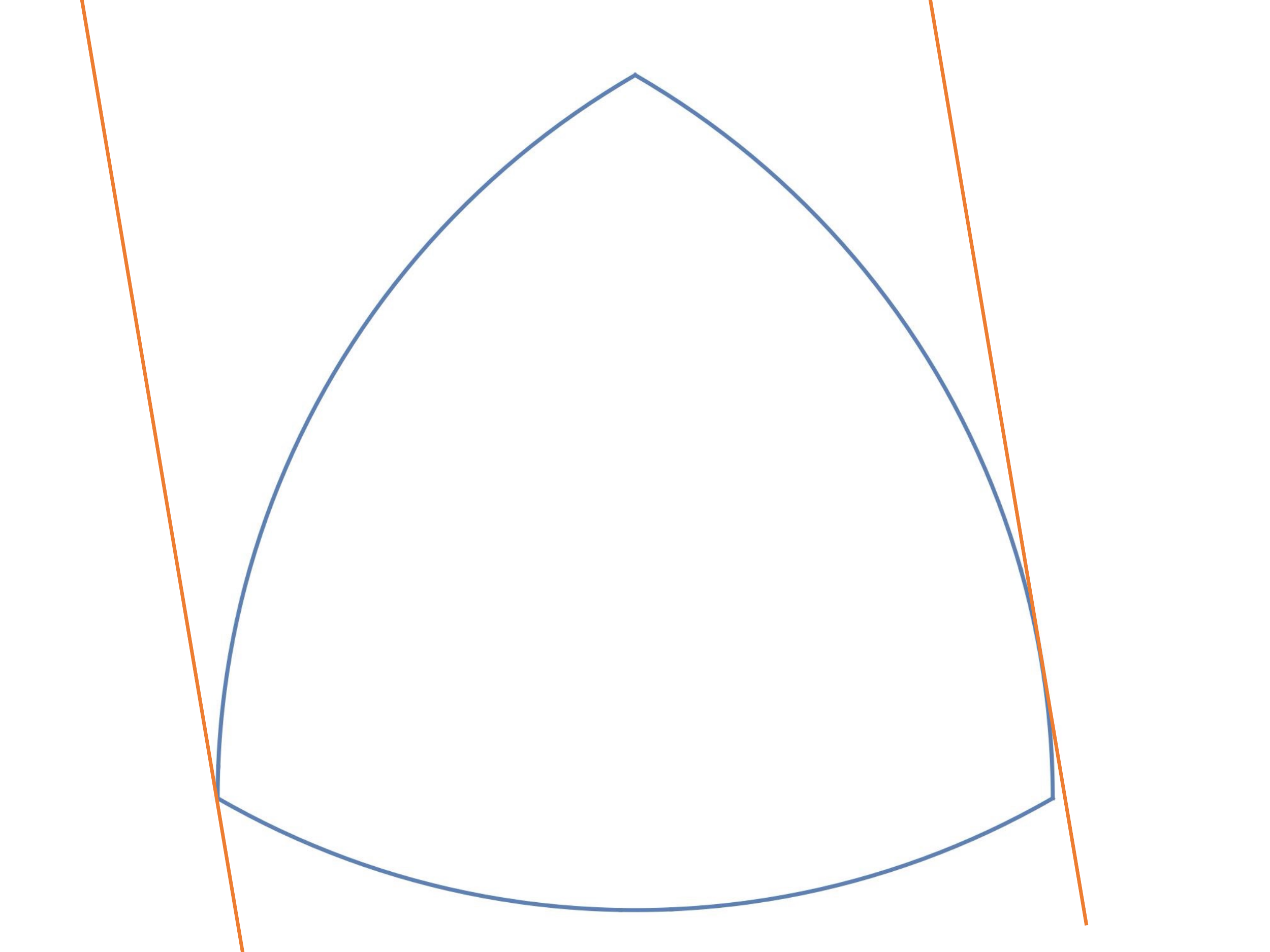}
                   \caption{A Reuleaux triangle with a pair of parallel supporting lines.}
     \end{subfigure}
\end{figure}

\par A fundamental theorem in the study of constant width curves is due independently to Blaschke \cite{MR1511839} and Lebesgue \cite{Lebesgue}.
\\\\
\noindent {\bf The Blaschke--Lebesgue theorem}. Among all curves of constant width, Reuleaux triangles enclose the least area. 
\\\\
\noindent The purpose of this note is to survey a proof of the Blaschke--Lebesgue theorem.   Our aim is to be as  self--contained as possible without going too far astray from our goal of understanding why this theorem holds.  

\par 
 This note is organized as follows.  We will first review the support function of a constant width shape; this is the principal tool employed throughout this paper.  Then we will verify the Blaschke--Lebesgue theorem using fact that constant width curves can be closely approximated by Reuleaux polygons.  This method was first used by Blaschke \cite{MR1511839}, although our argument is based on an exercise in the classic textbook of Yaglom and Boltyanski\u{\i} \cite{MR0123962}. We also acknowledge that there are several other proofs of the Blaschke--Lebesgue theorem including those given in \cite{MR51543,MR1391153,MR205152,MR1568234,MR2559951,MR1371579}. 

\begin{figure}[h]
     \centering
     \begin{subfigure}[b]{0.44\textwidth}
         \centering
         \includegraphics[width=\textwidth]{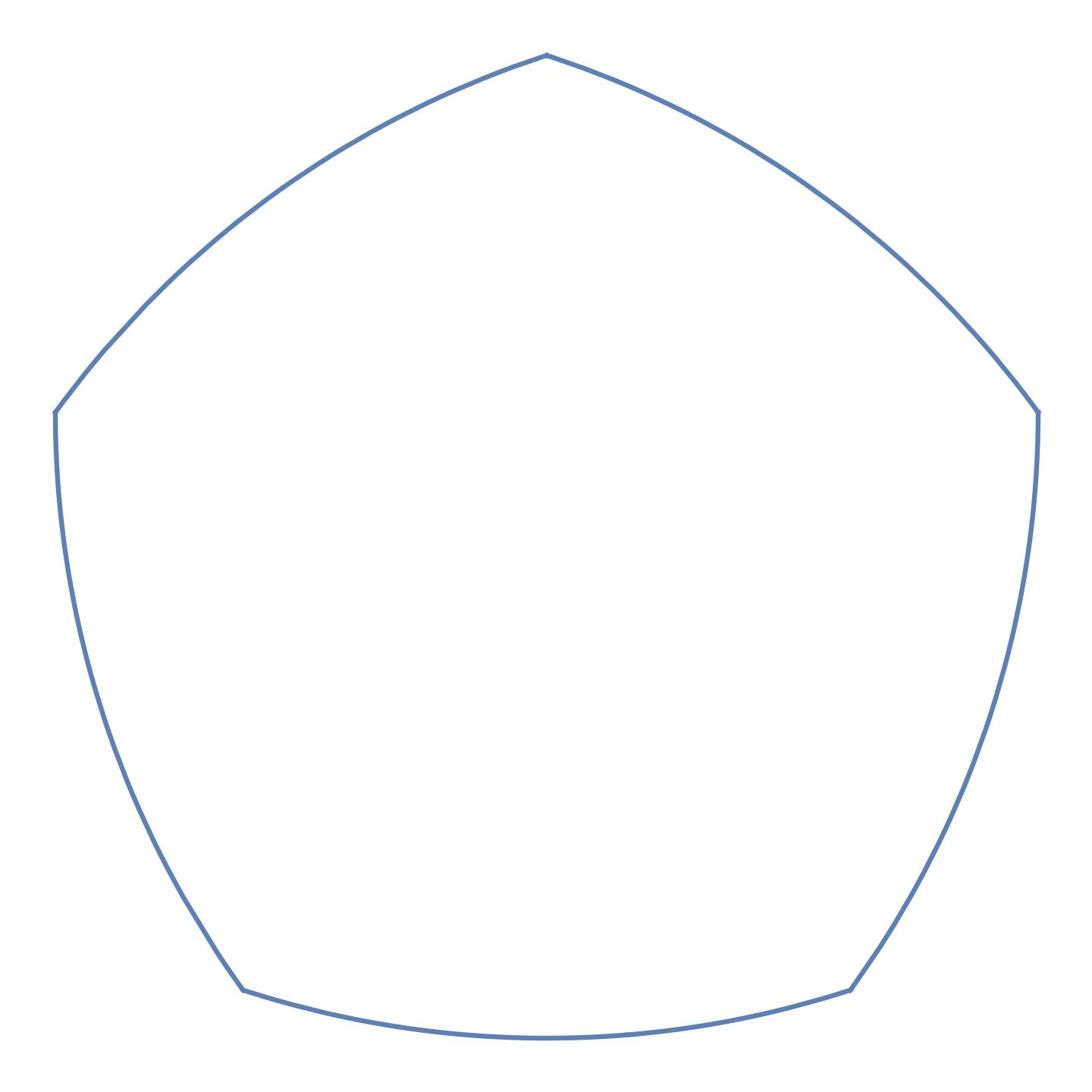}
                   \caption{A Reuleaux pentagon for which all circular arcs are the same length. We call this curve a regular Reuleaux pentagon. }
                   \label{PentFig}
     \end{subfigure}
    \hspace{.1in}
          \begin{subfigure}[b]{0.42\textwidth}
      \includegraphics[width=\columnwidth]{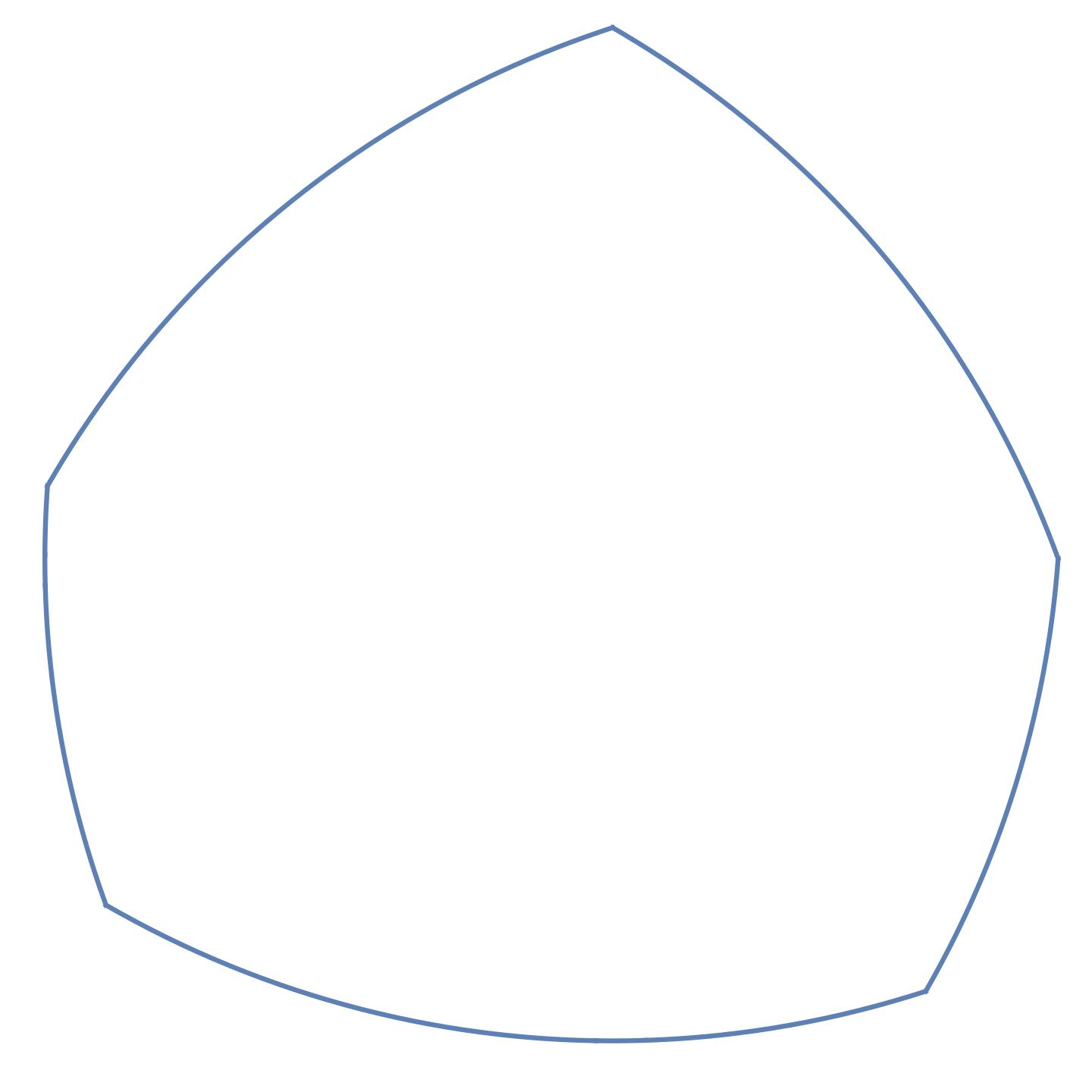}
                   \caption{A Reuleaux pentagon having boundary arcs of differing lengths. Such an example is an irregular Reuleaux pentagon.}
     \end{subfigure}
\end{figure}

\par We realize that many of the considerations in this note can be extended to shapes of constant width in the Euclidean space $\R^3$. Good references for three--dimensional shapes of constant width is the survey by Chakerian and Groemer \cite{MR731106} and the recent monograph by Martini, Montejano, and Oliveros \cite{MR3930585}. However, despite several notable efforts such as \cite{MR1371579,MR2763770,MR2342202}, an analog of the Blaschke--Lebesgue theorem has not been established for constant width shapes in $\R^3$.  As a result, we will focus our attention on planar shapes.

\section{The support function}
In this section, we will study a basic concept used to analyze convex shapes. Suppose $K\subset\R^2$ is compact and convex. For a given $\theta\in \R$, set 
$$
h(\theta)=\max_{x\in K}x\cdot u(\theta),
$$
where $u(\theta)=(\cos(\theta),\sin(\theta)).$ This function $h: \R\rightarrow \R$ is known as the {\it support function} of $K$ since 
the set of $x\in \R^2$ with $x\cdot u(\theta)\le h(\theta)$ is the supporting half-space of $K$ which has outward normal $u(\theta)$. 
\begin{figure}[h]
     \centering
         \includegraphics[width=.6\columnwidth]{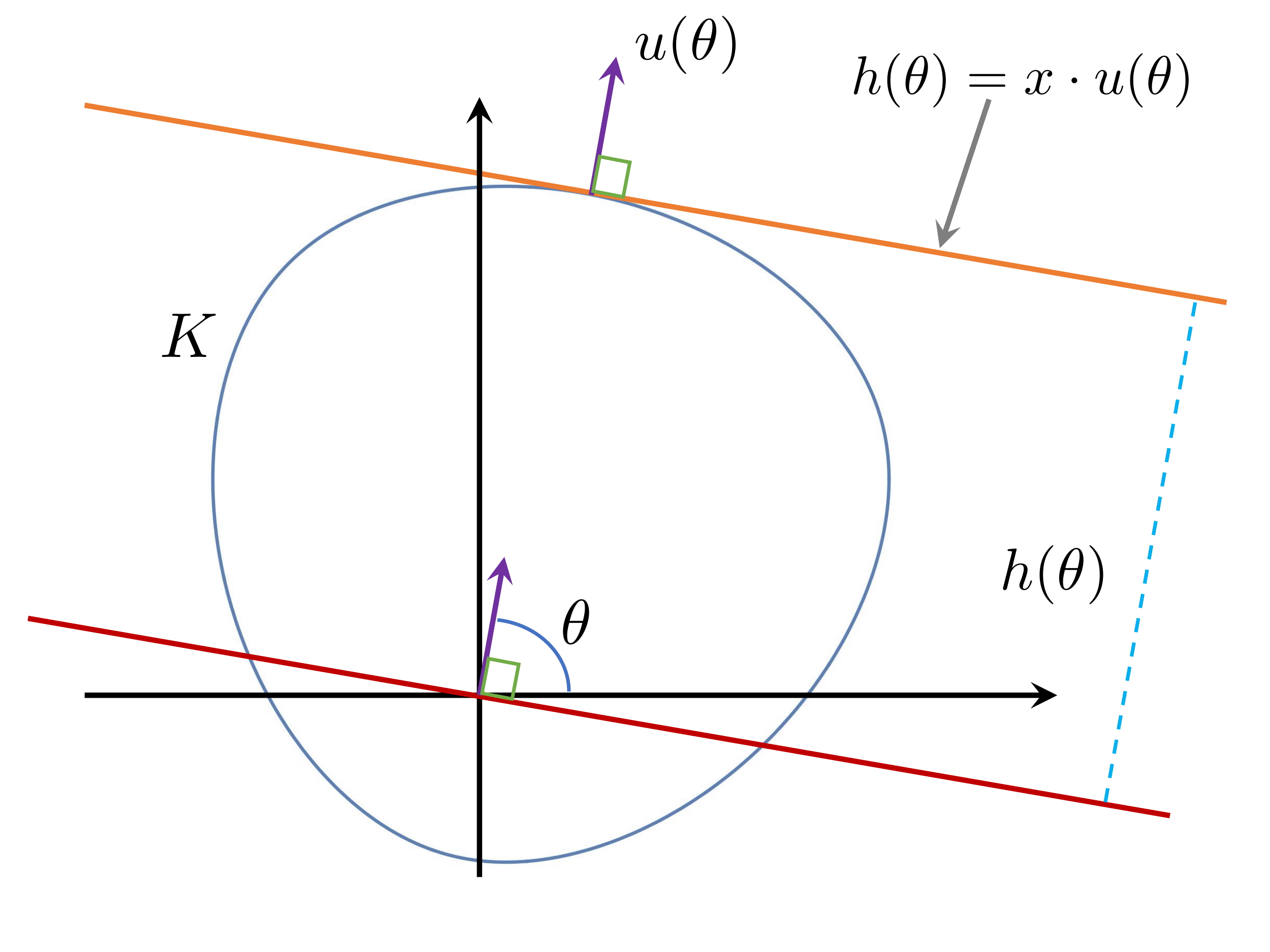}
                   \caption{This figure is the geometric interpretation of the value $h(\theta)$ as the distance from the origin to the line which supports $K$ and has outward normal $u(\theta)$.}
\end{figure}
\par  As $K$ is equal to the intersection of all half-spaces which include $K$, it follows that 
\be\label{KformulaH}
K=\bigcap_{\theta\in \R}\left\{x\in \R^2: x\cdot u(\theta)\le h(\theta)\right\}.
\ee
That is, $x\in K$ if and only if $x\cdot u(\theta)\le h(\theta)$ for all $\theta\in \R$. It particular, a convex shape can be recovered from its support function. It is also not hard to verify that for any $y\in K$, $h(\theta)-y\cdot u(\theta)$ is the distance from $y$ to the supporting line of $K$ with outward normal $u(\theta)$.

\par  Note that $h$ is continuous and $2\pi$--periodic. In the sequel, we'll write $h\in C(\es)$ for any function with these properties.  It will also be useful for us to characterize which functions in $C(\es)$ are support functions.  The inequality in part $(ii)$ of the following proposition was first noted by Kallay \cite{MR350618}. 
\begin{prop}\label{KallayProp}
Suppose $h\in C(\es)$. The following are equivalent. \\
$(i)$ $h$ is the support function of a convex and compact $K\subset \R^2$. \\
$(ii)$ For each $\theta\in \R$ and $\phi\in [-\pi/2,\pi/2]$,
$$
h(\theta+\phi)+h(\theta-\phi)\ge 2h(\theta)\cos(\phi).
$$
$(iii)$ For each smooth $f:\R\rightarrow [0,\infty)$ with compact support, 
\be\label{WeakODE}
\int_{\R}h(\theta)(f''(\theta)+f(\theta))d\theta\ge 0.
\ee
\end{prop}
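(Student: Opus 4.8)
The plan is to prove the cycle of implications $(i)\Rightarrow(ii)\Rightarrow(iii)\Rightarrow(i)$; the last of these is where the real work lies, while the first two reduce to short computations. For $(i)\Rightarrow(ii)$, I would fix $\theta$ and choose $x\in K$ with $x\cdot u(\theta)=h(\theta)$. The trigonometric identity $u(\theta+\phi)+u(\theta-\phi)=2\cos(\phi)u(\theta)$ holds for every $\phi$, and $h(\theta\pm\phi)\ge x\cdot u(\theta\pm\phi)$ by the very definition of the support function; adding the latter two inequalities gives $h(\theta+\phi)+h(\theta-\phi)\ge x\cdot(u(\theta+\phi)+u(\theta-\phi))=2\cos(\phi)h(\theta)$. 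I would only record the identity and this one-line computation.

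For $(ii)\Rightarrow(iii)$, given a smooth $f\ge 0$ of compact support and $0<\phi<\pi/2$, I would rewrite $(ii)$ as $h(\theta+\phi)+h(\theta-\phi)-2h(\theta)\ge-2(1-\cos\phi)h(\theta)$, multiply by $f(\theta)\ge 0$, and integrate over $\R$. Translating the variable of integration — the second-difference operator is self-adjoint — turns the left side into $\int_\R h(\theta)(f(\theta+\phi)+f(\theta-\phi)-2f(\theta))\,d\theta$. Dividing by $\phi^2$ and letting $\phi\to 0^+$, the uniform limits $\phi^{-2}(f(\theta+\phi)+f(\theta-\phi)-2f(\theta))\to f''(\theta)$ (Taylor's theorem, since $f$ is smooth with compact support) and $\phi^{-2}(1-\cos\phi)\to\tfrac12$, together with the boundedness of $h$ on a fixed compact set containing all relevant translates of $\operatorname{supp}f$, let me pass to the limit and obtain $\int_\R hf''\,d\theta\ge-\int_\R hf\,d\theta$, which is \eqref{WeakODE}.

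For $(iii)\Rightarrow(i)$ I would first smooth: let $h_\varepsilon=h*\rho_\varepsilon$ be the convolution on the circle $\es$ with a nonnegative smooth approximate identity $\rho_\varepsilon$. Since translating the test function $f$ preserves its admissibility in \eqref{WeakODE}, that inequality also holds with $h$ replaced by $h_\varepsilon$; as $h_\varepsilon$ is smooth, integrating by parts twice yields $h_\varepsilon''+h_\varepsilon\ge 0$ pointwise. Next I would show that any smooth function with this property is the support function of $K_\varepsilon:=\bigcap_\theta\{x:x\cdot u(\theta)\le h_\varepsilon(\theta)\}$: the bound $\max_{x\in K_\varepsilon}x\cdot u(\theta_0)\le h_\varepsilon(\theta_0)$ is automatic, and for a matching point I would take $x_\varepsilon(\theta_0)=h_\varepsilon(\theta_0)u(\theta_0)+h_\varepsilon'(\theta_0)u'(\theta_0)$ with $u'(\theta)=(-\sin\theta,\cos\theta)$, and check that $\psi(\theta):=h_\varepsilon(\theta)-x_\varepsilon(\theta_0)\cdot u(\theta)$ satisfies $\psi(\theta_0)=\psi'(\theta_0)=0$ and $\psi''+\psi=h_\varepsilon''+h_\varepsilon\ge 0$. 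A Sturm-type comparison — the quantity $W:=\psi'\sin(\theta-\theta_0)-\psi\cos(\theta-\theta_0)$ starts at $0$ and has $W'=(\psi''+\psi)\sin(\theta-\theta_0)\ge 0$ on $(\theta_0,\theta_0+\pi)$, so $\psi(\theta)/\sin(\theta-\theta_0)$ is nondecreasing there with limit $0$ at $\theta_0^+$ — forces $\psi\ge 0$ on $[\theta_0,\theta_0+\pi]$, and symmetrically on $[\theta_0-\pi,\theta_0]$, hence everywhere by $2\pi$-periodicity. Thus $x_\varepsilon(\theta_0)\in K_\varepsilon$ with $x_\varepsilon(\theta_0)\cdot u(\theta_0)=h_\varepsilon(\theta_0)$, so $h_\varepsilon$ is the support function of the nonempty compact convex set $K_\varepsilon$. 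Finally I would let $\varepsilon\to 0$: since $h_\varepsilon\to h$ uniformly, the $K_\varepsilon$ lie in a common ball, the points $x_\varepsilon(\theta_0)$ subconverge to some $x_*$ with $x_*\cdot u(\theta)\le h(\theta)$ for all $\theta$ and $x_*\cdot u(\theta_0)=h(\theta_0)$, so $h$ is the support function of the nonempty compact convex set $K=\bigcap_\theta\{x:x\cdot u(\theta)\le h(\theta)\}$ of \eqref{KformulaH}.

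I expect the last implication to be the main obstacle: one must ensure the distributional inequality \eqref{WeakODE} survives mollification, carry out the second-order ODE comparison certifying that the candidate boundary points belong to $K_\varepsilon$, and then verify that the limit $\varepsilon\to 0$ neither empties nor unbounds the body. The first two implications are, by contrast, essentially one-line computations once the right trigonometric identity and the self-adjointness of the second difference are in hand.
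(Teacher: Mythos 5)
Your proof is correct. The first two implications coincide with the paper's argument: the same identity $u(\theta+\phi)+u(\theta-\phi)=2\cos(\phi)u(\theta)$, the same self-adjointness of the second difference under the integral, and the same passage to the limit $\phi\to 0$. Where you genuinely diverge is in $(iii)\Rightarrow(i)$. The paper also mollifies to get $(h^\epsilon)''+h^\epsilon\ge 0$, but then extends $h^\epsilon$ to the positively one-homogeneous function $H^\epsilon(u)=|u|h^\epsilon(\theta)$, verifies convexity of $H^\epsilon$ by computing its Hessian in the frame $\{u(\theta),u'(\theta)\}$, passes to a convex limit $H$, and produces the touching point $p\in K$ as an element of the subdifferential of $H$ at $u(\theta)$. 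You instead write the candidate touching point explicitly as $x_\varepsilon(\theta_0)=h_\varepsilon(\theta_0)u(\theta_0)+h_\varepsilon'(\theta_0)u'(\theta_0)$ --- the same formula the paper later derives for the boundary parametrization $\gamma$ in Proposition \ref{gammaMultiPart} --- and certify $x_\varepsilon(\theta_0)\in K_\varepsilon$ by a Sturm comparison for $\psi''+\psi\ge 0$ with a double zero at $\theta_0$. That argument is sound: the Wronskian-type quantity $W$ gives monotonicity of $\psi/\sin(\theta-\theta_0)$ on each half-period, and since $[\theta_0-\pi,\theta_0+\pi]$ covers a full period you do get $\psi\ge 0$ everywhere. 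Your route trades the convex-analysis machinery (homogeneous convex extensions and subdifferentials, plus the exercise the paper leaves to the reader about $H(u(\theta))=p\cdot u(\theta)$) for an elementary ODE comparison, which makes the touching point concrete and anticipates the parametrization used later; the paper's route yields the convexity of the one-homogeneous extension as a byproduct, which is the classical characterization of support functions. Both arguments conclude the limit $\varepsilon\to 0$ identically, by trapping the candidate points in a fixed ball and extracting a convergent subsequence.
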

\begin{proof}
$(i)\Rightarrow (ii)$ Using the angle sum-to-product formulae for sine and cosine, we find 
$$
u(\theta+\phi)+u(\theta-\phi)= 2u(\theta)\cos(\phi). 
$$
Therefore, 
$$
2x\cdot u(\theta)\cos(\phi)=x\cdot u(\theta+\phi)+x\cdot u(\theta-\phi)\le h(\theta+\phi)+h(\theta-\phi)
$$
for $x\in K$.  Since $\cos(\phi)\ge 0$ for $\phi\in [-\pi/2,\pi/2]$, we also have 
$$
2h(\theta)\cos(\phi)=\max_{x\in K}2x\cdot u(\theta)\cos(\phi)\le h(\theta+\phi)+h(\theta-\phi).
$$
$(ii)\Rightarrow (iii)$ Suppose  $f:\R\rightarrow [0,\infty)$ is smooth and has compact support. Then for $\phi\in[-\pi/2,\pi/2]$ with $\phi\neq 0$, 
\begin{align*}
0&\le\frac{1}{\phi^2} \int_{\R}f(\theta)\left( h(\theta+\phi)+h(\theta-\phi)-2h(\theta)\cos(\phi)\right)d\theta\\
&=\frac{1}{\phi^2}\int_{\R}h(\theta)\left( f(\theta+\phi)+f(\theta-\phi)-2f(\theta)\cos(\phi)\right)d\theta\\
&=\int_{\R}h(\theta)\left[\frac{ f(\theta+\phi)+f(\theta-\phi)-2f(\theta)}{\phi^2}\right]+h(\theta)f(\theta)\left[2\frac{(1-\cos(\phi))}{\phi^2}\right]d\theta.
\end{align*}
By our assumptions on $f$, we can send $\phi\rightarrow 0$ in the integral above to get 
$$
0\le \int_{\R}h(\theta)(f''(\theta)+f(\theta))d\theta.
$$
$(iii)\Rightarrow (i)$  For $\epsilon>0$, we consider the mollification of $h$: set
$$
h^\epsilon(\theta)=\int_{\R}\eta^\epsilon(\phi)h(\theta-\phi)d\phi
$$
for $\theta\in \R$. Here $\eta^\epsilon(t)=\epsilon^{-1}\eta(t/\epsilon)$, and $\eta: \R \rightarrow [0,\infty)$ is a smooth symmetric function which is supported in $[-1,1]$ with $\int_{\R}\eta(t)dt=1$.  As $h\in C(\es)$, it is routine to show $h^\epsilon $ converges to $h$ uniformly. Moreover, $h^\epsilon$ is smooth and  
\begin{align*}
(h^\epsilon)''(\theta)+h^\epsilon(\theta)&=\int_{\R}\left[(\eta^\epsilon)''(\phi)+\eta^\epsilon(\phi)\right]h(\theta-\phi)d\phi\\
&=\int_{\R}\left[(\eta^\epsilon)''(\theta-\phi)+\eta^\epsilon(\theta-\phi)\right]h(\phi)d\phi\\
&\ge 0
\end{align*}
for each $\theta\in \R$. 

\par For $u\in \R^2$ with $u\neq 0$, we define 
$$
H^\epsilon(u)=|u|h^\epsilon(\theta),
$$
where $\theta\in\R$ is chosen so that $u/|u|=u(\theta)$; and when $u=0$, we set $H^\epsilon(u)=0$.   Note that $H^\epsilon$ is positively homogeneous and satisfies 
$$
H^\epsilon(u(\theta))=h^\epsilon(\theta)
$$
for each $\theta\in \R$. In particular, $H^\epsilon$ is smooth away from the origin and direct computation yields  $DH^\epsilon(u)\cdot u=H^\epsilon(u)$ and $D^2H^\epsilon(u)u=0$ for $u\neq 0$. It follows that
$$
D^2H^\epsilon(u(\theta))(\alpha u(\theta)+\beta u'(\theta))\cdot (\alpha u(\theta)+\beta u'(\theta))=\beta^2((h^\epsilon)''(\theta)+h^\epsilon(\theta))\ge 0.
$$ 
We conclude $H^\epsilon$ is convex.  Sending $\epsilon\rightarrow 0$, we find that $H^\epsilon$ converges locally uniformly to a positively homogeneous and convex $H: \R^2\rightarrow \R$ which fulfills
$$
h(\theta)=H(u(\theta))
$$
for all $\theta$. 

\par  Define $K$ as in \eqref{KformulaH} with the $h$ we are currently studying, and let $\tilde h$ be the support function of $K$.  Fix $\theta\in \R$. First observe that 
$$
\tilde h(\theta)=\max\{x\cdot u(\theta): x\in K\}\le h(\theta),
$$
as $h(\theta)\ge x\cdot u(\theta)$ for all $x\in K$.  We leave it as an exercise to show that  
$$
H(u(\theta))=p\cdot u(\theta)
$$
for any $p$ belonging to the subdifferential of $H$ at $u(\theta)$. In particular, for any $v\in \R^2$, 
$$
H(v)\ge H(u(\theta))+p\cdot (v-u(\theta))=p\cdot v.
$$
Thus, $h(\theta)=p\cdot u(\theta)$ and $h(\phi)=H(u(\phi))\ge p\cdot u(\phi)$ for all $\phi\in \R$. It follows that $p\in K$ and 
$$
h(\theta)=p\cdot u(\theta)\le \tilde h(\theta).
$$
As a result, $h$ is the support function of $K$. 
\end{proof}
\begin{cor}\label{2ndDeriveEst}
The support function $h$ is twice differentiable for almost every $\theta\in \R$ and 
$$
h''(\theta)+h(\theta)\ge 0
$$
at any such $\theta$. 
\end{cor}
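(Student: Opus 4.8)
The plan is to deduce from part~$(ii)$ of Proposition~\ref{KallayProp} that $h$ is \emph{semiconvex} — that it equals a convex function minus a smooth one — and then to invoke the classical one–dimensional regularity of convex functions. First I would set $c=\max_{\theta\in\R}|h(\theta)|$, which is finite because $h\in C(\es)$, and put $g(\theta)=h(\theta)+\tfrac{c}{2}\theta^{2}$. Using $1-\cos\phi\le\tfrac12\phi^{2}$ together with $|h|\le c$, the inequality in $(ii)$ rearranges, for every $\theta\in\R$ and every $\phi\in[-\pi/2,\pi/2]$, into
$$
h(\theta+\phi)+h(\theta-\phi)-2h(\theta)\ \ge\ 2h(\theta)\big(\cos\phi-1\big)\ \ge\ -c\,\phi^{2}.
$$
Since $(\theta+\phi)^{2}+(\theta-\phi)^{2}-2\theta^{2}=2\phi^{2}$, adding $\tfrac{c}{2}$ times this identity turns the last display into the midpoint inequality $2g(\theta)\le g(\theta+\phi)+g(\theta-\phi)$, valid for all $|\phi|\le\pi/2$. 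A continuous function obeying this small–scale midpoint convexity is convex on $\R$, so $g$ is convex.

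Next I would use the elementary fact that a convex function on an interval has a nondecreasing one–sided derivative and hence, by Lebesgue's theorem on the differentiability of monotone functions, is twice differentiable at almost every point. Applying this to $g$ and recalling that $h(\theta)=g(\theta)-\tfrac{c}{2}\theta^{2}$, we conclude that $h$ is twice differentiable almost everywhere on $\R$.

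To obtain the pointwise inequality, fix $\theta_0$ at which $h$ is twice differentiable. Then $h(\theta_0\pm\phi)=h(\theta_0)\pm h'(\theta_0)\phi+\tfrac12 h''(\theta_0)\phi^{2}+o(\phi^{2})$ as $\phi\to0$, so the second difference quotient $\phi^{-2}\big(h(\theta_0+\phi)+h(\theta_0-\phi)-2h(\theta_0)\big)$ converges to $h''(\theta_0)$. Dividing the inequality in $(ii)$ by $\phi^{2}$ and letting $\phi\to0$, while noting that $2h(\theta_0)(\cos\phi-1)/\phi^{2}\to -h(\theta_0)$, yields $h''(\theta_0)\ge-h(\theta_0)$, which is the asserted inequality.

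The one step that needs genuine care — the main obstacle — is the claim that continuity together with midpoint convexity restricted to increments $|\phi|\le\pi/2$ already forces $g$ to be convex. The standard route is to bootstrap the midpoint inequality to all dyadic points of any interval of length at most $\pi/2$, use continuity to pass to every point and thereby get convexity on each such short interval, and finally patch overlapping short intervals together (convexity being characterized by a nondecreasing one–sided derivative, which propagates across overlaps). This is routine but is the place where the argument must be written carefully. As an alternative one can instead start from part~$(iii)$: it says the distribution $h''+h$ is nonnegative, hence a nonnegative Radon measure $\mu$, so $h'$ has the measure $\mu-h\,d\theta$ as its distributional derivative; thus $h'$ is of bounded variation and therefore differentiable almost everywhere, and comparing the absolutely continuous part of $\mu$ with $h''$ again gives $h''+h\ge0$ a.e.
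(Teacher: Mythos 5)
Your argument is correct and follows essentially the same route as the paper: both start from the inequality in part $(ii)$ of the preceding proposition, bound the symmetric second difference below by $-c\,\phi^{2}$ using $\cos\phi\ge 1-\tfrac12\phi^{2}$, conclude that $h$ plus a quadratic is convex, deduce almost-everywhere twice differentiability, and then let $\phi\to0$ in the second difference quotient to get $h''(\theta)+h(\theta)\ge0$. The only differences are expository: where the paper cites Alexandrov's theorem you supply its one-dimensional proof via Lebesgue's theorem on monotone functions, and you spell out the midpoint-convexity bootstrap that the paper leaves implicit.
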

\begin{proof}
By the previous proposition, 
\be\label{2ndDiffQuotient}
\frac{ h(\theta+\phi)-2h(\theta)+h(\theta-\phi)}{\phi^2}\ge 2\left(\frac{\cos(\phi)-1}{\phi^2}\right)h(\theta)
\ee
for all $\theta$ and each $\phi\in [-\pi/2,\pi/2]$ with $\phi\neq 0$. Since $1\ge \cos(\phi)\ge 1-\frac{1}{2}\phi^2$, we find 
$$
\frac{ h(\theta+\phi)-2h(\theta)+h(\theta-\phi)}{\phi^2}\ge -\|h\|_\infty
$$
for all $\theta$ and  $0<|\phi|\le \pi/2$ . 
It follows that there is a constant $b$ for which $h(\theta)+\frac{b}{2}\theta^2$ is convex.  By Alexandrov's theorem, $h$ is twice differentiable for almost every $\theta\in \R$.  If $h$ is twice differentiable at $\theta$, we can send $\phi$ to $0$ in \eqref{2ndDiffQuotient} to get 
$h''(\theta)\ge -h(\theta).$
\end{proof}
\begin{rem}
For a convex and compact $K$ with smooth boundary, $h''(\theta)+h(\theta)$ is the radius of curvature of the boundary point with outward unit normal $u(\theta)$.  
\end{rem}
\subsection{Support function of a constant width curve}
Let us now refine our considerations to constant width curves.  We will use the fact that constant width 
shapes are strictly convex (Theorem 3.1.1 of \cite{MR3930585}). In particular, if $K\subset \R^2$ has constant width, then for each $\theta\in \R$ there is a unique $\gamma(\theta)\in \partial K$ such that 
\be\label{hgammaform}
h(\theta)=\gamma(\theta)\cdot u(\theta).
\ee
It turns out that this implies $h$ is actually continuously differentiable. 
\begin{lem}\label{ConeLemma}
The mapping $\gamma:\R\rightarrow \partial K$ is continuous and 
$$
h'(\theta)=\gamma(\theta)\cdot u'(\theta).
$$
for all $\theta\in \R$. 
\end{lem}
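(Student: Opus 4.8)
The plan is to exploit the defining property that $\gamma(\theta)$ is the unique maximizer of $x \mapsto x \cdot u(\theta)$ over $K$, together with strict convexity, to first establish continuity of $\gamma$, and then to differentiate the identity $h(\theta) = \gamma(\theta)\cdot u(\theta)$ carefully. For continuity: suppose $\theta_n \to \theta$. Since $\gamma(\theta_n) \in \partial K$ and $K$ is compact, any subsequence of $(\gamma(\theta_n))$ has a further subsequence converging to some point $p \in \partial K$. Passing to the limit in $h(\theta_n) = \gamma(\theta_n)\cdot u(\theta_n)$ and using continuity of $h$ and of $u$, we get $h(\theta) = p\cdot u(\theta)$, so $p$ is a maximizer of $x\mapsto x\cdot u(\theta)$ over $K$; by the uniqueness in \eqref{hgammaform} (which is where strict convexity enters), $p = \gamma(\theta)$. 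Since every subsequence has a further subsequence converging to the same limit $\gamma(\theta)$, the full sequence converges, and $\gamma$ is continuous.

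For the derivative formula, I would bound the difference quotient of $h$ from both sides using the optimality of $\gamma$. On one hand, $h(\theta+\phi) \ge \gamma(\theta)\cdot u(\theta+\phi)$ and $h(\theta) = \gamma(\theta)\cdot u(\theta)$, so
$$
\frac{h(\theta+\phi) - h(\theta)}{\phi} \ge \gamma(\theta)\cdot \frac{u(\theta+\phi)-u(\theta)}{\phi}
$$
for $\phi > 0$ (with the inequality reversed for $\phi < 0$). On the other hand, $h(\theta) \ge \gamma(\theta+\phi)\cdot u(\theta)$ and $h(\theta+\phi) = \gamma(\theta+\phi)\cdot u(\theta+\phi)$ give
$$
\frac{h(\theta+\phi)-h(\theta)}{\phi} \le \gamma(\theta+\phi)\cdot \frac{u(\theta+\phi)-u(\theta)}{\phi}
$$
for $\phi > 0$. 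As $\phi \to 0^+$, the quantity $\tfrac{1}{\phi}(u(\theta+\phi)-u(\theta)) \to u'(\theta)$, and by the continuity of $\gamma$ just proved, $\gamma(\theta+\phi)\cdot \tfrac{1}{\phi}(u(\theta+\phi)-u(\theta)) \to \gamma(\theta)\cdot u'(\theta)$. Squeezing, the right derivative of $h$ at $\theta$ exists and equals $\gamma(\theta)\cdot u'(\theta)$. Running the same argument with $\phi \to 0^-$ (the two inequalities swap directions, but so does the sign of $\phi$) yields the same value for the left derivative. Hence $h'(\theta) = \gamma(\theta)\cdot u'(\theta)$, and since $\gamma$ and $u'$ are both continuous, $h \in C^1(\es)$.

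The main obstacle is simply making sure the one-sided squeeze is set up with the correct inequality directions on each side of $\phi = 0$ and that the uniqueness of $\gamma(\theta)$ is genuinely invoked — without strict convexity the limit point $p$ need not be $\gamma(\theta)$ and the argument collapses. Everything else (compactness extraction, continuity of $u$ and $u'$, the elementary limit $\tfrac{1}{\phi}(u(\theta+\phi)-u(\theta))\to u'(\theta)$) is routine. It is also worth noting that no appeal to Corollary \ref{2ndDeriveEst} or Alexandrov's theorem is needed here; the $C^1$ regularity comes purely from strict convexity of $K$.
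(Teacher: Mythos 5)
Your proposal is correct and follows essentially the same route as the paper: a subsequence-plus-uniqueness argument (via strict convexity) for the continuity of $\gamma$, followed by the two-sided bound on the difference quotient of $h$ using the optimality of $\gamma(\theta)$ and $\gamma(\theta+\phi)$, and a squeeze as $\phi\rightarrow 0^{\pm}$. No meaningful differences to report.
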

\begin{proof}
Suppose $\theta_k\rightarrow \theta$ as $k\rightarrow\infty$. As $\gamma(\theta_k)\in \partial K$, there is a subsequence $\gamma(\theta_{k_j})$ which converges to some $\xi\in \partial K$.  Moreover, 
$$
h(\theta)=\lim_{j\rightarrow\infty}h(\theta_{k_j})=\lim_{j\rightarrow\infty}\gamma(\theta_{k_j})\cdot u(\theta_{k_j})=\xi\cdot u(\theta).
$$
By uniqueness, $\xi=\gamma(\theta)$. As this limit is independent of the subsequence, $\gamma(\theta_k)\rightarrow \gamma(\theta)$. We conclude that $\gamma$ is continuous.  

\par Fix $\theta\in \R$, and note that as $\gamma(\theta)\in K$, 
$$
h(\theta+\tau)\ge \gamma(\theta)\cdot u(\theta+\tau)
$$
for each $\tau>0$.  Therefore, 
$$
\frac{h(\theta+\tau)-h(\theta)}{\tau}\ge \gamma(\theta)\cdot \frac{u(\theta+\tau)-u(\theta)}{\tau}.
$$
Likewise, we find 
$$
\frac{h(\theta+\tau)-h(\theta)}{\tau}\le \gamma(\theta+\tau)\cdot \frac{u(\theta+\tau)-u(\theta)}{\tau}.
$$
As a result, 
$$
\lim_{\tau\rightarrow0^+}\frac{h(\theta+\tau)-h(\theta)}{\tau}=\gamma(\theta)\cdot u'(\theta).
$$
Virtually the same considerations for $\tau<0$ lead to 
$$
\lim_{\tau\rightarrow0^-}\frac{h(\theta+\tau)-h(\theta)}{\tau}=\gamma(\theta)\cdot u'(\theta).
$$
We conclude that $h'(\theta)$ exists and is equal to $\gamma(\theta)\cdot u'(\theta)$.  
\end{proof}
\par In the proposition below, we will  say $f\in C^{1,1}(\mathbb{S})$ provided $f: \R\rightarrow \R$ is $2\pi$--periodic, $f$ is continuously differentiable, and $f'$ is Lipschitz continuous.   
\begin{prop}\label{ConstantWidthConstraint}
Suppose $K$ has constant width. Then 
$$
h(\theta+\pi)+h(\theta)=1
$$
for all $\theta\in \R$. Moreover, 
$$
0\le h''(\theta)+h(\theta)\le 1
$$
for almost all $\theta\in \R$ and $h\in C^{1,1}(\mathbb{S})$.
\end{prop}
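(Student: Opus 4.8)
The plan is to establish the three claims in order, with the width identity $h(\theta+\pi)+h(\theta)=1$ serving as the key new tool and also as the source of the main subtlety.

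First I would prove the width identity. For a fixed direction $\theta$, the lines $\{x:x\cdot u(\theta)=h(\theta)\}$ and $\{x:x\cdot u(\theta+\pi)=h(\theta+\pi)\}$ both support $K$ and have opposite outward normals $u(\theta)$ and $u(\theta+\pi)=-u(\theta)$; rewriting the second as $\{x:x\cdot u(\theta)=-h(\theta+\pi)\}$, the distance between this parallel pair of lines equals $h(\theta)+h(\theta+\pi)$. Since $K$ has constant width $1$, I would conclude $h(\theta+\pi)+h(\theta)=1$ for every $\theta$. Moreover, Lemma~\ref{ConeLemma} already gives that $h$ is differentiable with $h'(\theta)=\gamma(\theta)\cdot u'(\theta)$, and continuity of $\gamma$ makes $h'$ continuous, so $h$ is $2\pi$-periodic and $C^1$; in particular one may differentiate the width identity to obtain $h'(\theta+\pi)=-h'(\theta)$ for all $\theta$.

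Next I would show $h\in C^{1,1}(\mathbb{S})$, i.e.\ that $h'$ is Lipschitz. Revisiting the proof of Corollary~\ref{2ndDeriveEst}, the function $g(\theta)=h(\theta)+\tfrac{b}{2}\theta^2$ is convex for $b=\|h\|_\infty$, so $g'=h'+b\,\theta$ is nondecreasing; hence for $\theta_1<\theta_2$ one has $h'(\theta_2)-h'(\theta_1)\ge -b(\theta_2-\theta_1)$. Applying this one-sided bound at the shifted points $\theta_1+\pi<\theta_2+\pi$ and invoking $h'(\cdot+\pi)=-h'(\cdot)$ yields the reverse inequality $h'(\theta_2)-h'(\theta_1)\le b(\theta_2-\theta_1)$. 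Together these give $|h'(\theta_2)-h'(\theta_1)|\le\|h\|_\infty\,|\theta_2-\theta_1|$, so $h'$ is Lipschitz and $h\in C^{1,1}(\mathbb{S})$.

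Finally I would derive the pointwise bounds. Since $h'$ is Lipschitz, $h$ is twice differentiable almost everywhere, and $h''(\theta)+h(\theta)\ge 0$ at each such point by Corollary~\ref{2ndDeriveEst}. Because $h'(\theta+\pi)=-h'(\theta)$ holds identically, the set of twice-differentiability points is invariant under $\theta\mapsto\theta+\pi$, with $h''(\theta+\pi)=-h''(\theta)$ there; combining the lower bound at $\theta+\pi$ with $h(\theta+\pi)=1-h(\theta)$ gives $0\le h''(\theta+\pi)+h(\theta+\pi)=-h''(\theta)+1-h(\theta)$, that is, $h''(\theta)+h(\theta)\le 1$. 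The step I expect to require the most care is the $C^{1,1}$ claim: the estimate from Corollary~\ref{2ndDeriveEst} only controls the second difference of $h$ from one side, and it is the antisymmetry $h'(\theta+\pi)=-h'(\theta)$, not convexity alone, that supplies the matching bound from the other side; one should also ensure the null set of non-twice-differentiable points is taken $\pi$-periodic so that $h''(\theta+\pi)=-h''(\theta)$ is available almost everywhere.
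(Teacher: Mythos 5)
Your proposal is correct, and its overall architecture matches the paper's: the width identity comes from computing the distance between the two parallel supporting lines with normals $u(\theta)$ and $u(\theta+\pi)=-u(\theta)$, and the upper bound $h''+h\le 1$ comes from applying the lower bound $h''+h\ge 0$ at $\theta+\pi$ together with $h(\theta+\pi)=1-h(\theta)$ and $h''(\theta+\pi)=-h''(\theta)$. Where you genuinely diverge is the $C^{1,1}$ step. The paper argues: since $-h\le h''\le 1-h$ almost everywhere, $h''$ is essentially bounded, ``as a result'' $h'$ is Lipschitz. Read literally, that inference needs an extra ingredient --- the distributional second derivative of $h$ is a priori only a signed measure (the derivative of the nondecreasing function $h'+b\theta$), and boundedness of its almost-everywhere pointwise density does not by itself rule out a singular part. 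Your route sidesteps this entirely: you use the everywhere-defined derivative $h'$ from Lemma~\ref{ConeLemma}, the monotonicity of $h'(\theta)+b\theta$ with $b=\|h\|_\infty$ to get the one-sided increment bound $h'(\theta_2)-h'(\theta_1)\ge -b(\theta_2-\theta_1)$, and the antisymmetry $h'(\theta+\pi)=-h'(\theta)$ (obtained by differentiating the width identity) to reflect that bound into the matching upper bound, yielding $|h'(\theta_2)-h'(\theta_1)|\le \|h\|_\infty|\theta_2-\theta_1|$ directly. This is more elementary and closes the small gap in the paper's phrasing; your remark that the null set of non-twice-differentiability points must be taken $\pi$-periodic is also the right precaution, and is exactly what the paper handles implicitly via ``$h$ is twice differentiable at $\theta+\pi$ as well.''
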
 
\begin{proof}
Fix $x\in K$. Recall that the distance from $x$ to the supporting line with outward normal $u(\theta)$ is $h(\theta)-x\cdot u(\theta)$. Likewise, the distance from $x$ to the supporting line with outward normal $u(\theta+\pi)$ is $h(\theta+\pi)-x\cdot u(\theta+\pi)$. Since 
$
u(\theta+\pi)=-u(\theta),
$
the distance between these supporting lines is equal to 
$$
h(\theta+\pi)-x\cdot u(\theta+\pi)+h(\theta)-x\cdot u(\theta)=h(\theta+\pi)+h(\theta).
$$
As $K$ has constant width, it must be that $h(\theta+\pi)+h(\theta)=1$ for all $\theta$.  

\par Select a $\theta$ for which $h''(\theta)$ exists. As $h(\phi+\pi)=1-h(\phi)$ for all $\phi$, $h$ is twice differentiable at $\theta+\pi$, as well.   Moreover, 
$$
h''(\theta+\pi)=-h''(\theta)
$$
and 
$$
h''(\theta)+h(\theta)=1-[h''(\theta+\pi)+h(\theta+\pi)]\le 1. 
$$
Moreover, since
$$
-h(\theta)\le h''(\theta)\le 1-h(\theta)
$$
for almost every $\theta\in \R$, $h''$ is an essentially bounded function. As a result, $h'$ is Lipschitz continuous. 
\end{proof}
\begin{rem}
The inequality $h''(\theta)+h(\theta)\le 1$ implies that the curvature of a smooth constant width curve is always greater than or equal to one.  
\end{rem}

\par The subsequent assertion is a converse to some of the facts derived above. It is also a useful tool in generating shapes of constant width. 
\begin{prop}\label{KallayProp} 
Suppose $h: \mathbb{S}\rightarrow \R$ satisfies
$$
\begin{cases}
h\in C^{1,1}\left(\mathbb{S}\right)\\
h(\theta+\pi)+h(\theta)=1 \textup{ for all $\theta\in \R$}\\
h''(\theta)+h(\theta)\ge0 \textup{ for almost every $\theta\in \R$}.
\end{cases}
$$
Then $h$ is the support function of the constant width shape $K$ defined in \eqref{KformulaH}.  
\end{prop}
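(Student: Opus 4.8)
The plan is to establish the two defining features of a constant width shape for the set $K$ given by \eqref{KformulaH}: that $K$ is compact and convex with support function $h$, and that every pair of parallel supporting lines of $K$ is at distance one.

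For the first point it suffices, by the implication $(iii)\Rightarrow(i)$ of the first proposition of this section, to verify the integral criterion \eqref{WeakODE}; moreover the proof of that implication constructs a convex compact set with support function $h$ which is precisely the set $K$ of \eqref{KformulaH}. This is the only place the hypothesis $h\in C^{1,1}(\mathbb{S})$ is needed: since $h'$ is Lipschitz it is absolutely continuous, so $h''$ exists almost everywhere, lies in $L^\infty$, and integration by parts is valid. Hence, for any smooth $f\ge 0$ with compact support,
\[
\int_\R h(\theta)\bigl(f''(\theta)+f(\theta)\bigr)\,d\theta=\int_\R\bigl(h''(\theta)+h(\theta)\bigr)f(\theta)\,d\theta\ge 0,
\]
where the inequality uses $h''+h\ge 0$ almost everywhere together with $f\ge 0$. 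Thus \eqref{WeakODE} holds and $h$ is the support function of $K$.

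For the second point, recall that the line supporting $K$ with outward normal $u(\theta)$ is $\{x: x\cdot u(\theta)=h(\theta)\}$ and that the line supporting $K$ with outward normal $u(\theta+\pi)=-u(\theta)$ is $\{x: x\cdot u(\theta)=-h(\theta+\pi)\}$. These are parallel and separated by the distance $h(\theta)+h(\theta+\pi)$, which equals one by hypothesis. As this holds for every $\theta$, the shape $K$ has constant width one; in particular it is neither a point nor a segment, so it is a bona fide constant width shape, completing the proof.

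I expect no genuine difficulty here. The two points deserving a word of care are the integration by parts under only $C^{1,1}$ regularity and the identification of the set in \eqref{KformulaH} with the body produced in the proof of the first proposition, and both are routine. As an alternative, one could verify condition $(ii)$ of that proposition directly: the discrete inequality $h(\theta+\phi)+h(\theta-\phi)\ge 2h(\theta)\cos(\phi)$ follows from $h''+h\ge 0$ by comparing $\phi\mapsto h(\theta+\phi)$ with the solution $\phi\mapsto h(\theta)\cos(\phi)+h'(\theta)\sin(\phi)$ of $w''+w=0$ sharing its value and derivative at $\phi=0$ — but the argument via \eqref{WeakODE} is shorter.
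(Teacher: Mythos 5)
Your proposal is correct and follows essentially the same route as the paper: verify the integral criterion \eqref{WeakODE} by integrating by parts twice (justified because $h'$ is Lipschitz), invoke the implication $(iii)\Rightarrow(i)$ to conclude that $h$ is the support function of the $K$ in \eqref{KformulaH}, and then deduce constant width from $h(\theta)+h(\theta+\pi)=1$. You merely spell out the distance-between-supporting-lines computation that the paper leaves implicit (having already done it in the earlier proposition), so no further comment is needed.
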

\begin{proof}
Since $h\in C^{1,1}\left(\mathbb{S}\right)$, $h'$ is Lipschitz continuous.  Thus, for smooth $f:\R\rightarrow [0,\infty)$ with compact support we may integrate $h(\theta)f''(\theta)$ by parts twice to find
$$
\int_{\R}h(\theta)(f''(\theta)+f(\theta))d\theta =\int_{\R}(h''(\theta)+h(\theta))f(\theta)d\theta\ge 0.
$$
As explained in our proof of Proposition \ref{KallayProp}, $h$ is the support function the convex and compact $K$ defined in \eqref{KformulaH}.  Since $h(\theta+\pi)+h(\theta)=1 $ for all $\theta$, $K$ has constant width. 
\end{proof}

\par Let us study a few examples. 
\begin{ex}[Circles]
The support function of the circle of radius $1/2$ centered at $a\in \R^2$ is given by 
$$
h(\theta)=\max_{|x-a|\le 1/2}x\cdot u(\theta)=\max_{|x-a|\le 1/2}(x-a)\cdot u(\theta)+a\cdot u(\theta)=\frac{1}{2}+a\cdot u(\theta).
$$
\end{ex}
\begin{ex}[Reuleaux triangle]
Suppose $K$ is a Reuleaux triangle with vertices 
$$
\displaystyle \left(\frac{1}{2},\frac{1}{2\sqrt{3}}\right),\quad
\displaystyle \left(-\frac{1}{2},\frac{1}{2\sqrt{3}}\right),\quad\text{and}\quad
\displaystyle \left(0,-\frac{1}{\sqrt{3}}\right).
$$
If $u(\theta)$ is an outward normal to $\partial K$ at a vertex $a$, then $h(\theta)=u(\theta)\cdot a$. Furthermore, if 
$u(\theta)$ is an outward normal to $\partial K$ at a circular arc centered at $b$, then $h(\theta)=1+u(\theta)\cdot b$. Combining these observations with some elementary 
case analysis leads to the following expression for the support function of $K$. For $\theta\in [0,2\pi]$,
$$
h(\theta)=
\begin{cases}
\displaystyle u(\theta)\cdot \left(\frac{1}{2},\frac{1}{2\sqrt{3}}\right),\quad &\displaystyle 0\le \theta\le \frac{\pi}{3}\\\\
\displaystyle 1+u(\theta)\cdot\left(0,-\frac{1}{\sqrt{3}}\right) ,\quad & \displaystyle\frac{\pi}{3}\le \theta\le\frac{2\pi}{3}\\\\
\displaystyle u(\theta)\cdot \left(-\frac{1}{2},\frac{1}{2\sqrt{3}}\right),\quad &\displaystyle \frac{2\pi}{3}\le\theta\le \pi\\\\
\displaystyle 1+u(\theta)\cdot \left(\frac{1}{2},\frac{1}{2\sqrt{3}}\right),\quad & \displaystyle\pi \le \theta\le\frac{4\pi}{3}\\\\
\displaystyle u(\theta)\cdot\left(0,-\frac{1}{\sqrt{3}}\right),\quad &\displaystyle \frac{4\pi}{3}\le\theta\le \frac{5\pi}{3}\\\\
\displaystyle 1+u(\theta)\cdot \left(-\frac{1}{2},\frac{1}{2\sqrt{3}}\right),\quad & \displaystyle\frac{5\pi}{3}\le\theta\le 2\pi.
\end{cases}
$$
\end{ex}
\begin{ex}[regular Reuleaux polygons]\label{RegReulEx}
We can build on our example above to express the support function of a {\it regular} Reuleaux polygon. This is a Reuleaux polygon in which the lengths of the circular arcs forming the boundary are all equal.  Suppose $N\ge 3$ with $N$ odd, and set  
$$
x_k=\left(\frac{\sin\left(\frac{k\pi}{N}\right)-\sin\left(\frac{(k-1)\pi}{N}\right)}{2\sin\left(\frac{\pi}{N}\right)},-\frac{\cos\left(\frac{k\pi}{N}\right)-\cos\left(\frac{(k-1)\pi}{N}\right)}{2\sin\left(\frac{\pi}{N}\right)}\right)
$$
for $k=1,2,\dots, 2N$.  Next define
$$
h(\theta)=
\begin{cases}
 x_k\cdot u(\theta), \; &\text{ for $\theta\in\left[\frac{(k-1)\pi}{N},\frac{k\pi}{N}\right]$ and $k=1,3,\dots, 2N-1$}\\\\
1-x_k\cdot u(\theta), \; & \text{ for $\theta\in\left[\frac{(k-1)\pi}{N},\frac{k\pi}{N}\right]$ and $k=2,4,\dots, 2N$}
\end{cases}
$$
for $\theta\in [0,2\pi]$ and extend this function $2\pi$-periodically to all of $\R$.  We leave it as an exercise to verify that $h$ is a support function of the $N$-sided regular Reuleaux polygon with vertices $x_1,x_3,\dots, x_{2N-1}$. 
The Reuleaux triangle described in the example above is the case $N=3$. We've also displayed the case $N=5$ in Figure \ref{PentFig}.
\end{ex}
\begin{ex}[Perturbation of a circle]
Using Proposition \ref{KallayProp}, we can design a curve of constant width starting with any $g\in C^{1,1}(\es)$ which satisfies 
$$
g(\theta+\pi)=-g(\theta)
$$
for all $\theta\in \R$.  For $\delta>0$, set 
$$
h(\theta)=\frac{1}{2}+\delta g(\theta)
$$
for $\theta\in \R$. Note that  $h\in C^{1,1}(\es)$ and 
$$
h(\theta)+h(\theta+\pi)=1
$$
for all $\theta\in \R$.  Also note for almost every $\theta\in \R$,  
$$
h''(\theta)+h(\theta)=\frac{1}{2}+\delta (g''(\theta)+g(\theta))\ge 0
$$
provided $\delta$ is chosen sufficiently small. By Proposition \ref{KallayProp}, $h$ is the support function of a constant width curve.  
\begin{rem} We used this method to create the curve in Figure \ref{FirstFig}. For that specific example, we chose $\delta=1/160$ and $g(\theta)=\cos(3\theta)+\sin(7\theta)$. 
\end{rem}
\end{ex}
\begin{figure}[h]
     \centering
     \begin{subfigure}[b]{0.32\textwidth}
         \centering
         \includegraphics[width=\textwidth]{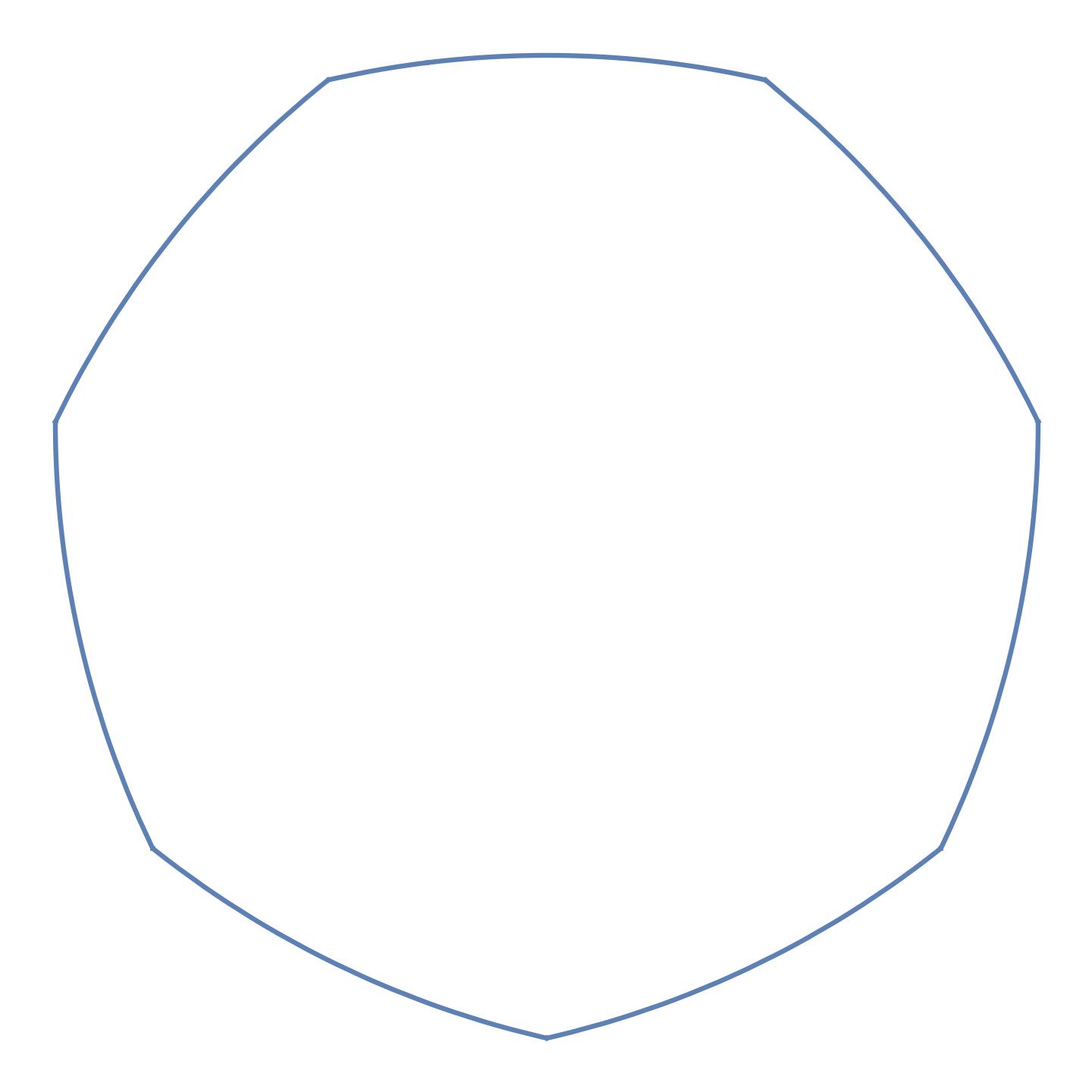}
     \end{subfigure}
     \begin{subfigure}[b]{0.32\textwidth}
         \centering
         \includegraphics[width=\textwidth]{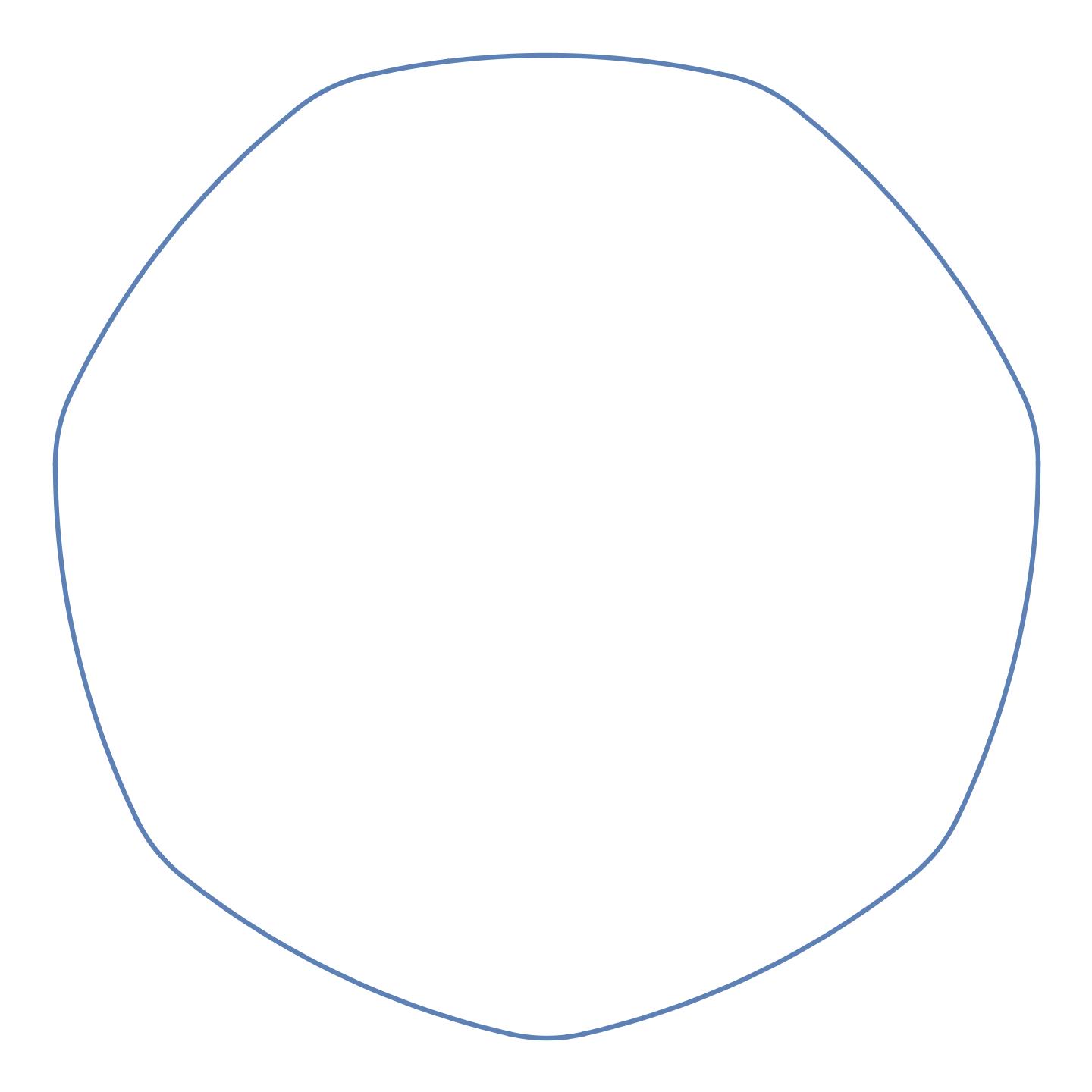}
     \end{subfigure}
      \begin{subfigure}[b]{0.32\textwidth}
         \centering
         \includegraphics[width=\textwidth]{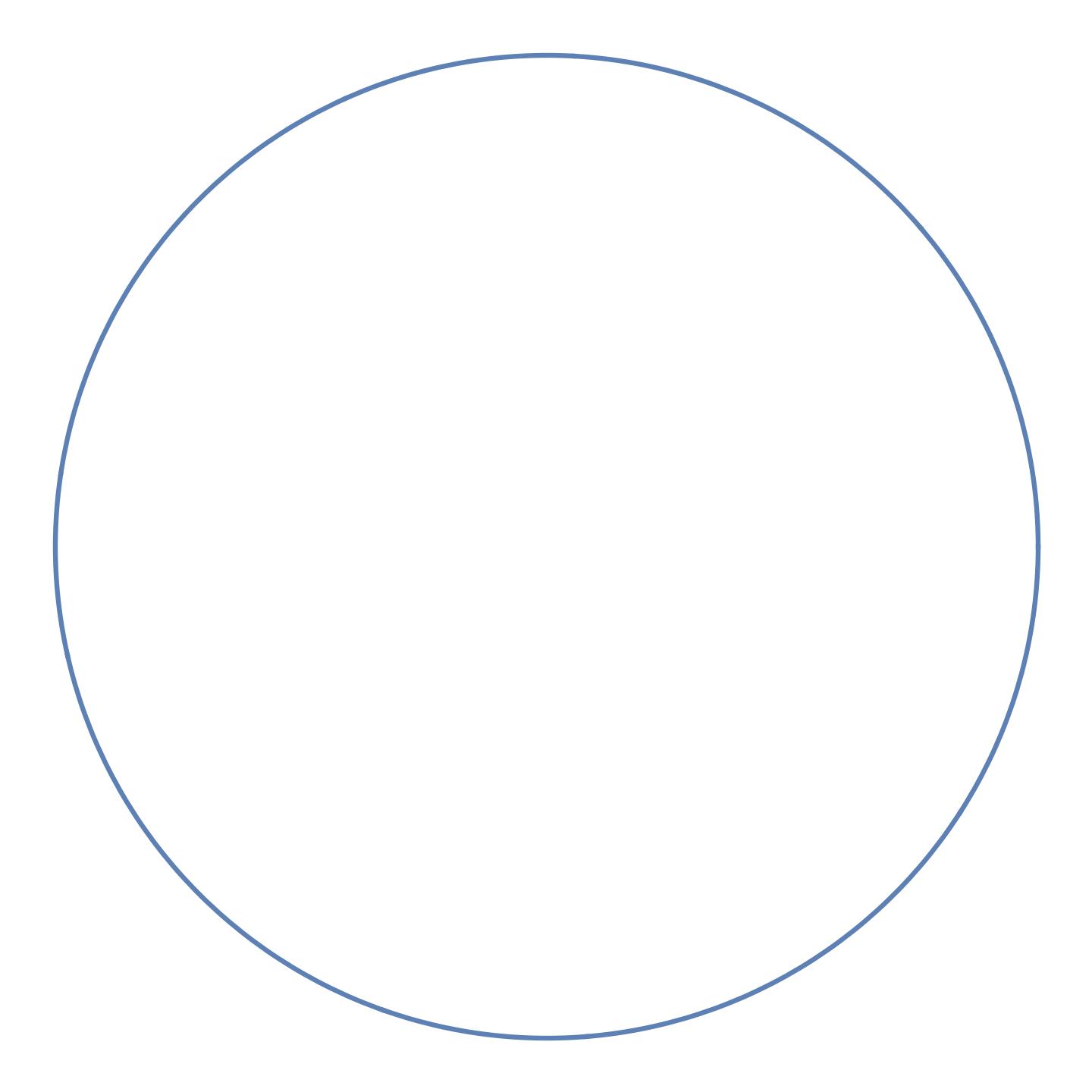}
     \end{subfigure}
      \caption{On the left is a regular Reuleaux heptagon $K_1$, and on the right is a circle $K_2$ of radius 1/2; the middle curve is the convex combination $(4/5) K_1+(1/5)K_2$.}\label{ConvCombFig}
\end{figure}
\begin{ex}[convex combinations]
Examples of constant width shapes can also be designed by forming convex combination of constant width shapes. In particular, it is routine to verify 
that if $K_1,K_2\subset \R^2$ are constant width shapes and $\lambda\in [0,1]$, the support function of 
$$
(1-\lambda)K_1+ \lambda K_2:=\{(1-\lambda)x_1+ \lambda x_2\in \R^2: x_1\in K_1, x_2\in K_2\}
$$
is $(1-\lambda)h_1+\lambda h_2.$ As $(1-\lambda)h_1+\lambda h_2$ satisfies the hypotheses 
of Proposition \ref{KallayProp},   $(1-\lambda)K_1+\lambda K_2$ is also a constant width shape.  See Figure \ref{ConvCombFig} for an example.
\end{ex}

\subsection{Parametrization of a constant width curve}
It is possible to parametrize the boundary curve of a constant width $K$ using the support function of $K$.  To do so, we will write an explicit formula for $\gamma(\theta)$ discussed in \eqref{hgammaform} and identify a few properties of this path. 
\begin{prop}\label{gammaMultiPart}
Suppose $K\subset \R^2$ has constant width, $h$ is the support function of $K$, and define $\gamma: \mathbb{\R}\rightarrow \partial K$ via \eqref{hgammaform}.  
(i) For $\theta\in \R$,  
$$
\gamma(\theta)=h(\theta)u(\theta)+h'(\theta)u'(\theta).
$$
(ii) For $\theta\in \R$, 
$$
\gamma(\theta+\pi)=\gamma(\theta)-u(\theta)
$$
(iii) $\gamma$  is surjective.\\
(iv) $\gamma$ is injective on any interval $(\theta_0,\theta_1)\subset [0,2\pi]$ for which 
\be\label{injectivityCond}
h''(\theta)+h(\theta)>0\text{ for almost every $\theta\in (\theta_0,\theta_1)$}. 
\ee
(v) For $\theta_1,\theta_2\in \R$,
\be\label{gammaLipEst}
|\gamma(\theta_1)-\gamma(\theta_2)|\le |\theta_1-\theta_2|.
\ee
\end{prop}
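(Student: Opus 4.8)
The plan is to deduce all five parts from the formula in (i) together with the constant--width identity $h(\theta+\pi)+h(\theta)=1$ of Proposition \ref{ConstantWidthConstraint}. For (i), note that $u(\theta)=(\cos\theta,\sin\theta)$ and $u'(\theta)=(-\sin\theta,\cos\theta)$ form an orthonormal basis of $\R^2$, so $\gamma(\theta)=(\gamma(\theta)\cdot u(\theta))u(\theta)+(\gamma(\theta)\cdot u'(\theta))u'(\theta)$; substituting $\gamma(\theta)\cdot u(\theta)=h(\theta)$ from \eqref{hgammaform} and $\gamma(\theta)\cdot u'(\theta)=h'(\theta)$ from Lemma \ref{ConeLemma} gives the claimed expression. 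For (ii), evaluate this at $\theta+\pi$ and insert $u(\theta+\pi)=-u(\theta)$, $u'(\theta+\pi)=-u'(\theta)$, $h(\theta+\pi)=1-h(\theta)$, and its derivative $h'(\theta+\pi)=-h'(\theta)$; the resulting four terms rearrange to $\gamma(\theta)-u(\theta)$. For (iii), given $y\in\partial K$ take a supporting line of $K$ at $y$ with outward unit normal $u(\theta)$; then $y\cdot u(\theta)=\max_{x\in K}x\cdot u(\theta)=h(\theta)$, so $y=\gamma(\theta)$ by the uniqueness in \eqref{hgammaform} that strict convexity guarantees.

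Differentiating (i) at any point where $h''$ exists, which is almost every $\theta$ since $h\in C^{1,1}(\mathbb{S})$, and using $u''=-u$ yields the identity
\[
\gamma'(\theta)=(h(\theta)+h''(\theta))\,u'(\theta).
\]
This immediately gives (v): since $0\le h''(\theta)+h(\theta)\le 1$ almost everywhere (Proposition \ref{ConstantWidthConstraint}) and $|u'(\theta)|=1$, we get $|\gamma'(\theta)|\le 1$ a.e.; and $\gamma$ is Lipschitz, being assembled from Lipschitz $2\pi$--periodic functions, so $|\gamma(\theta_1)-\gamma(\theta_2)|=\bigl|\int_{\theta_2}^{\theta_1}\gamma'(\theta)\,d\theta\bigr|\le|\theta_1-\theta_2|$. (Alternatively one integrates $(h+h'')u'$ by parts to recover $\gamma(\theta_1)-\gamma(\theta_2)$.)

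For (iv), the same identity gives $|\gamma'(\theta)|=h(\theta)+h''(\theta)$ a.e., so it is enough to show that if $\gamma(\alpha)=\gamma(\beta)=:p$ for some $\theta_0<\alpha<\beta<\theta_1$, then $\gamma$ is constant on a nonempty open subinterval of $(\theta_0,\theta_1)$; this makes $h+h''$ vanish a.e. on that subinterval, contradicting \eqref{injectivityCond}. The key point is that $\gamma(\theta)=p$ holds precisely when $u(\theta)$ is an outward normal of $K$ at $p$: one direction is the identity $h(\theta)=\gamma(\theta)\cdot u(\theta)=p\cdot u(\theta)=\max_{x\in K}x\cdot u(\theta)$, and the other uses strict convexity. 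Moreover the set of outward unit normals at any single point of $K$ is an arc of angular extent strictly less than $\pi$, for extent $\ge\pi$ would force $K$ into a line. Hence $S:=\{\theta:\gamma(\theta)=p\}$ is a $2\pi$--periodic union of closed intervals of length $<\pi$ (with gaps of length $>\pi$), on each of which $\gamma\equiv p$; and since $0\le\theta_0<\alpha$ and $\beta<\theta_1\le 2\pi$, the distinct points $\alpha,\beta$ lie in $S\cap(0,2\pi)$. A short case check completes the argument: either $\alpha,\beta$ lie in the same component $I$ of $S$, so that $[\alpha,\beta]\subset I\subset S$; or they lie in two components, which — as $\beta-\alpha<2\pi$ — must be $I_\alpha$ and $I_\alpha+2\pi$, forcing the left endpoint of $I_\alpha$ to be negative and hence $[0,\alpha]\subset I_\alpha\subset S$. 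In either case $\gamma$ is constant on a nondegenerate subinterval of $(\theta_0,\theta_1)$, as needed.

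I expect part (iv) to be the main obstacle, and within it the delicate point is the case $\beta-\alpha\ge\pi$: when $\beta-\alpha<\pi$ one can argue directly that $u(\theta)$ is a nonnegative combination of $u(\alpha)$ and $u(\beta)$ for all $\theta\in[\alpha,\beta]$, whence $\gamma(\theta)=p$ on that interval, but this fails once the arc from $u(\alpha)$ to $u(\beta)$ exceeds a semicircle. Circumventing this requires exploiting both the $<\pi$ bound on the width of a normal cone and the hypothesis $(\theta_0,\theta_1)\subset[0,2\pi]$, as sketched above; the remaining parts are routine bookkeeping with the representation formula from (i).
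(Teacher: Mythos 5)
Your proof is correct and follows the same overall route as the paper: parts (i), (ii), (iii), and (v) are argued exactly as in the paper's proof (orthonormal decomposition in $\{u(\theta),u'(\theta)\}$, the identity $h(\theta+\pi)+h(\theta)=1$, existence of a supporting line at each boundary point, and the a.e.\ derivative $\gamma'=(h''+h)u'$ with $0\le h''+h\le 1$). The one place where you genuinely add something is part (iv). The paper's argument is a single sentence: from $\gamma(\phi_0)=\gamma(\phi_1)=x$ it asserts that $h(\theta)=x\cdot u(\theta)$ on the whole interval between $\phi_0$ and $\phi_1$, which is immediate only when $\phi_1-\phi_0<\pi$ (so that the arc of directions $\{u(\theta):\phi_0\le\theta\le\phi_1\}$ is the short arc and lies in the normal cone at $x$). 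Since $(\theta_0,\theta_1)$ may be as large as $(0,2\pi)$, the case $\phi_1-\phi_0\ge\pi$ can occur, and there the constancy of $\gamma$ is forced on the \emph{complementary} arc instead; your component analysis of $S=\{\theta:\gamma(\theta)=p\}$, using that the set of unit normals at a point of a planar convex body with interior is an arc of length strictly less than $\pi$, handles exactly this case and still produces a nondegenerate subinterval of $(\theta_0,\theta_1)$ on which $h''+h=0$ a.e., contradicting \eqref{injectivityCond}. So your write-up is a more complete version of the paper's (iv); everything else is the paper's argument.
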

\begin{proof}
$(i)$ As $\{u(\theta), u'(\theta)\}$ is an orthonormal basis of $\R^2$, 
$$
\gamma(\theta)=[\gamma(\theta)\cdot u(\theta)]u(\theta)+[\gamma(\theta)\cdot u'(\theta)]u'(\theta)=h(\theta)u(\theta)+h'(\theta)u'(\theta).
$$
$(ii)$ This follows directly from the constant width condition $h(\theta+\pi)+h(\theta)=1$. 
$(iii)$ For each $x\in \partial K$, there is at least one supporting plane for $K$ which includes $x$. It follows that there is $\theta$ such that $h(\theta)=x\cdot u(\theta)$. This in turn implies $\gamma(\theta)=x$.  
  \par $(iv)$ Suppose that $\gamma(\phi_0)=\gamma(\phi_1)=:x\in \partial K$ for $\phi_0,\phi_1\in (\theta_0,\theta_1)$ with $\phi_0<\phi_1$. Then $x\in \partial K$ has two distinct supporting lines $h(\phi_0)=y\cdot u(\phi_0)$ and 
$h(\phi_1)=y\cdot u(\phi_1)$. It follows that $h(\theta)=x\cdot u(\theta)$ for $\theta\in (\phi_1,\phi_1)$, which contradicts \eqref{injectivityCond}.
As a result, $\gamma$ is injective on $(\theta_0,\theta_1)$. 
\par $(v)$ Direct computation gives  
\be\label{gammaDerivative}
\gamma'(\theta)=(h''(\theta)+h(\theta))u'(\theta)
\ee
for almost every $\theta\in \R$. Since $0\le h''(\theta)+h(\theta)\le 1$, $|\gamma'(\theta)|\le 1$ for almost every $\theta$. Therefore, 
$|\gamma(\theta_1)-\gamma(\theta_2)|\le |\theta_1-\theta_2|$ for all $\theta_1,\theta_2\in \R$. 
\end{proof}

A nice consequence of the above proposition is following. 
\begin{thm}[Barbier's theorem]\label{Barbier}
The perimeter of a constant width curve is equal to $\pi$. 
\end{thm}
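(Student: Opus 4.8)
The plan is to compute the perimeter of $\partial K$ by integrating the speed $|\gamma'(\theta)|$ of the parametrization furnished by Proposition \ref{gammaMultiPart}. From equation \eqref{gammaDerivative}, $\gamma'(\theta) = (h''(\theta)+h(\theta))u'(\theta)$ for almost every $\theta$, and since $0 \le h''(\theta)+h(\theta) \le 1$ (Proposition \ref{ConstantWidthConstraint}) with $|u'(\theta)|=1$, the speed is exactly $|\gamma'(\theta)| = h''(\theta)+h(\theta)$. The first step is to argue that $\gamma$ restricted to $[0,2\pi)$ traces out $\partial K$ exactly once, so that the perimeter equals $\int_0^{2\pi}|\gamma'(\theta)|\,d\theta$; here I would invoke surjectivity (part (iii)) together with the injectivity statement (part (iv)), noting that the set where $h''+h=0$ contributes nothing to arc length anyway, so any failure of global injectivity occurs on a $\gamma$-image of measure zero and does not affect the length integral. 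More carefully, since $\gamma$ is Lipschitz (part (v)), the length of the curve is $\int_0^{2\pi}|\gamma'(\theta)|\,d\theta$ provided $\gamma$ is injective up to a null set, which the previous proposition guarantees.

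The main computation is then
\[
L = \int_0^{2\pi}(h''(\theta)+h(\theta))\,d\theta = \int_0^{2\pi}h''(\theta)\,d\theta + \int_0^{2\pi}h(\theta)\,d\theta.
\]
Since $h \in C^{1,1}(\mathbb{S})$, the function $h'$ is absolutely continuous and $2\pi$-periodic, so the first integral vanishes: $\int_0^{2\pi}h''\,d\theta = h'(2\pi)-h'(0)=0$. For the second integral I would use the constant width relation $h(\theta+\pi)+h(\theta)=1$ from Proposition \ref{ConstantWidthConstraint}. Splitting $\int_0^{2\pi}h\,d\theta = \int_0^{\pi}h(\theta)\,d\theta + \int_\pi^{2\pi}h(\theta)\,d\theta$ and substituting $\theta \mapsto \theta+\pi$ in the second piece gives $\int_0^{\pi}\big(h(\theta)+h(\theta+\pi)\big)\,d\theta = \int_0^{\pi}1\,d\theta = \pi$. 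Hence $L = \pi$.

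The step I expect to require the most care is the first one: justifying that the arc length of $\partial K$ is genuinely $\int_0^{2\pi}|\gamma'(\theta)|\,d\theta$ rather than something larger because $\gamma$ might retrace part of the boundary. The clean way around this is to observe that on any maximal open interval where $h''+h \equiv 0$ a.e., the curve $\gamma$ stays put or moves along a line segment of zero length (in fact $\gamma' = 0$ there), so these intervals are exactly the preimages of the "corner" vertices of $K$ and contribute $0$ to the integral; on the complementary set $\gamma$ is injective by part (iv), and it is surjective by part (iii). Thus $\gamma|_{[0,2\pi)}$ is a rectifiable closed curve covering $\partial K$ with total variation equal to the integral of its speed, and no portion of $\partial K$ of positive length is covered more than once. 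Once this is in hand, the two integral evaluations above are routine and the theorem follows.
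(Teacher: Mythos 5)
Your proposal is correct and follows essentially the same route as the paper: compute $|\gamma'(\theta)|=h''(\theta)+h(\theta)$ from \eqref{gammaDerivative}, integrate over $[0,2\pi]$, kill the $h''$ term by periodicity of $h'$, and evaluate $\int_0^{2\pi}h\,d\theta=\pi$ via the relation $h(\theta+\pi)+h(\theta)=1$. Your added care in justifying that $\int_0^{2\pi}|\gamma'|\,d\theta$ really is the perimeter (handling the intervals where $\gamma'=0$ and invoking injectivity elsewhere) addresses a point the paper passes over silently, and it is handled correctly.
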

\begin{proof}
Let $h$ be a support function of a constant width curve and $\gamma$ the corresponding parametrization discussed in the previous proposition. In view of \eqref{gammaDerivative}, the perimeter of the curve is
$$
\int^{2\pi}_0| \gamma'(\theta)|d\theta=\int^{2\pi}_0(h''(\theta)+h(\theta))d\theta=\int^{2\pi}_0h(\theta)d\theta=\frac{1}{2}\int^{2\pi}_0(h(\theta+\pi)+h(\theta))d\theta=\pi.
$$
\end{proof}
\begin{cor}
Among all curves of constant width, circles of radius 1/2 enclose the most area.  Moreover, circles are the only curves of constant width attaining the maximum possible area. 
\end{cor}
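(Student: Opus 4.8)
The plan is to compute the enclosed area directly from the support function and then reduce the problem to a one--variable inequality of Wirtinger type. Let $h$ be the support function of a constant width curve and let $\gamma$ be the parametrization of $\partial K$ from Proposition \ref{gammaMultiPart}. Since $\gamma$ is a Lipschitz closed curve tracing $\partial K$ once in the counterclockwise sense, Green's theorem gives the area as $A=\tfrac12\int_0^{2\pi}\gamma(\theta)\times\gamma'(\theta)\,d\theta$, where $\times$ denotes the scalar cross product on $\R^2$. Plugging in $\gamma(\theta)=h(\theta)u(\theta)+h'(\theta)u'(\theta)$ from part $(i)$ together with $\gamma'(\theta)=(h''(\theta)+h(\theta))u'(\theta)$ from \eqref{gammaDerivative}, and using $u(\theta)\times u'(\theta)=1$ and $u'(\theta)\times u'(\theta)=0$, everything collapses to
\[
A=\frac12\int_0^{2\pi}h(\theta)\bigl(h''(\theta)+h(\theta)\bigr)\,d\theta .
\]
Because $h\in C^{1,1}(\mathbb{S})$, the term $h h''$ may be integrated by parts (the boundary contributions cancel by $2\pi$--periodicity), which yields $A=\tfrac12\int_0^{2\pi}\bigl(h(\theta)^2-h'(\theta)^2\bigr)\,d\theta$.

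Next I would bring in the constant width constraint. Write $h(\theta)=\tfrac12+g(\theta)$; then $g\in C^{1,1}(\mathbb{S})$ and, by Proposition \ref{ConstantWidthConstraint}, $g(\theta+\pi)=-g(\theta)$ for all $\theta$. This antiperiodicity immediately forces $\int_0^{2\pi}g(\theta)\,d\theta=0$ (split the integral at $\pi$ and substitute $\theta\mapsto\theta+\pi$). Since $h^2-(h')^2=\tfrac14+g+g^2-(g')^2$, integrating and halving gives
\[
A=\frac{\pi}{4}+\frac12\int_0^{2\pi}\bigl(g(\theta)^2-g'(\theta)^2\bigr)\,d\theta .
\]
The constant $\pi/4$ is precisely the area of a disk of radius $1/2$, so the corollary is equivalent to the assertion that $\int_0^{2\pi}(g^2-(g')^2)\,d\theta\le 0$, with equality exactly when $h$ is the support function of such a disk.

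For the last step I would expand $g$ in a Fourier series (this is legitimate: $g$ is continuous and $g'=h'$ is Lipschitz, hence in $L^2(\mathbb{S})$, so Parseval applies to both $g$ and $g'$). Writing $g(\theta)=\sum_{n\in\Z}c_ne^{in\theta}$, the identity $g(\theta+\pi)=-g(\theta)$ is equivalent to $c_n=0$ for every even $n$; in particular $c_0=0$, recovering the mean--zero condition. Then
\[
\int_0^{2\pi}\bigl(g^2-(g')^2\bigr)\,d\theta=2\pi\sum_{n\ \mathrm{odd}}(1-n^2)\,|c_n|^2\le 0 ,
\]
since $1-n^2\le 0$ for every odd integer $n$, strictly so once some $c_n\neq0$ with $|n|\ge3$. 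Hence $A\le\pi/4$, which is the first claim. Equality forces $c_n=0$ for all $|n|\ge3$, i.e. $g(\theta)=a\cos\theta+b\sin\theta$ for some $a,b\in\R$ (using that $g$ is real), so that $h(\theta)=\tfrac12+(a,b)\cdot u(\theta)$ --- which by the first example in the paper is exactly the support function of the circle of radius $1/2$ centered at $(a,b)$. This gives the uniqueness statement.

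I do not expect a serious obstacle here; the only points requiring care are bookkeeping about regularity --- that the Green's theorem area formula is valid for the merely Lipschitz curve $\gamma$ (guaranteed by part $(v)$ of Proposition \ref{gammaMultiPart}), that the integration by parts is justified for $h\in C^{1,1}(\mathbb{S})$, and that $g'\in L^2(\mathbb{S})$ so that Parseval/Wirtinger may be invoked. One could alternatively deduce the result by combining Theorem \ref{Barbier} with the classical isoperimetric inequality $4\pi A\le L^2$ (giving $A\le\pi^2/(4\pi)=\pi/4$, with equality only for disks), but since that inequality is itself proved via the Fourier argument above, the self--contained route is to argue as described.
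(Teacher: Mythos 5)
Your proof is correct, but it takes a genuinely different route from the paper. The paper's proof is three lines: Barbier's theorem gives perimeter $\pi$, and the classical isoperimetric inequality $4\pi A\le L^2$ (invoked as a known fact, together with its equality case) immediately yields $A\le \pi/4$ with equality only for circles. You instead bypass both Barbier and the isoperimetric inequality, working directly from the support--function area formula $A=\tfrac12\int_0^{2\pi}(h^2-h'^2)\,d\theta$, substituting $h=\tfrac12+g$ with $g$ antiperiodic, and running Hurwitz's Fourier/Wirtinger argument on $g$; the antiperiodicity kills the even modes, the coefficient $1-n^2$ is nonpositive on the remaining odd modes, and equality pins $g$ down to the first harmonics, i.e.\ to a translate of the disk. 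The trade-off is clear: the paper's argument is shorter and exploits the result it has just proved, but outsources the hard analytic content (including the rigidity statement) to the isoperimetric inequality; yours is longer but self-contained within the machinery of the paper --- indeed it amounts to a proof of the isoperimetric inequality in the constant-width setting, and it uses the constant-width hypothesis only through the vanishing of even Fourier modes (in fact only $c_0=0$ is needed for the inequality itself). Your regularity bookkeeping (Lipschitz $\gamma$ for Green's theorem, $h\in C^{1,1}(\mathbb{S})$ for the integration by parts, $g'\in L^\infty\subset L^2$ for Parseval) is all justified by Propositions \ref{ConstantWidthConstraint} and \ref{gammaMultiPart}, and your closing remark correctly identifies the paper's actual route as the alternative.
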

\begin{proof}
Suppose $K\subset \R^2$ is constant width shape and that $K$ encloses area $A$. Barbier's theorem implies that the perimeter of $K$ is equal to $\pi$. According to the isoperimetric inequality, $ A\le \pi/4$ and equality holds if and only if $K$ is a circle. We conclude as any circle of radius $1/2$ has constant width and area $A=\pi/4$.
\end{proof}
It will also be useful to express the area of a constant width shape in terms of the support function.  We will use $A(K)$ to denote the area of a convex and compact $K\subset \R^2$. 
\begin{prop}
Suppose $K\subset \R^2$ has constant width and $h$ is the support function of $K$. Then 
$$
A(K)=\frac{1}{2}\int^{2\pi}_0h\left(h''+h\right)d\theta.
$$ 
\end{prop}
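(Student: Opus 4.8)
The plan is to parametrize $\partial K$ by the map $\gamma(\theta)=h(\theta)u(\theta)+h'(\theta)u'(\theta)$ supplied by Proposition~\ref{gammaMultiPart}, and then to evaluate the enclosed area by the classical ``shoelace'' formula
$$
A(K)=\frac{1}{2}\int_0^{2\pi}\big(\gamma_1(\theta)\gamma_2'(\theta)-\gamma_2(\theta)\gamma_1'(\theta)\big)\,d\theta,
$$
which is valid for a rectifiable closed curve traversed once in the counterclockwise sense. Everything then reduces to an algebraic simplification of the integrand together with a justification that $\gamma$ is an admissible parametrization for this formula.

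The algebra is the short and decisive step, and I would do it first. By \eqref{gammaDerivative} we have $\gamma'(\theta)=(h''(\theta)+h(\theta))u'(\theta)$ for almost every $\theta$, so $\gamma_1'=-(h''+h)\sin\theta$ and $\gamma_2'=(h''+h)\cos\theta$, while $\gamma_1=h\cos\theta-h'\sin\theta$ and $\gamma_2=h\sin\theta+h'\cos\theta$. Substituting and using $\cos^2\theta+\sin^2\theta=1$ gives
$$
\gamma_1(\theta)\gamma_2'(\theta)-\gamma_2(\theta)\gamma_1'(\theta)=h(\theta)\big(h''(\theta)+h(\theta)\big)
$$
for almost every $\theta$; equivalently, since $u(\theta)$ and $u'(\theta)$ form a positively oriented orthonormal frame, the oriented area density $\gamma\times\gamma'$ equals $(hu+h'u')\times(h''+h)u'=h(h''+h)$. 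Integrating over $[0,2\pi]$ and dividing by two yields the claimed identity. (As a sanity check, for the circle of radius $1/2$ one has $h\equiv 1/2$ and the right-hand side is $\tfrac12\int_0^{2\pi}\tfrac14\,d\theta=\pi/4$.)

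The one point requiring genuine care — and the step I expect to be the main obstacle — is justifying the area formula for our $\gamma$, which is only Lipschitz by Proposition~\ref{gammaMultiPart}(v) and which may be stationary on entire subintervals (for instance at the corners of a Reuleaux polygon, where $h''+h=0$). One way is to observe that $\gamma'(\theta)$, whenever nonzero, points in the direction $u'(\theta)$, which winds monotonically once around the unit circle as $\theta$ runs over $[0,2\pi]$; combined with surjectivity, Proposition~\ref{gammaMultiPart}(iii), this shows $\gamma$ covers the Jordan curve $\partial K$ exactly once and counterclockwise, so Green's theorem for rectifiable boundaries applies. The cleaner, self-contained alternative — the route I would take — is to mollify $h$ as in the proof of Proposition~\ref{KallayProp}: the mollification $h^\epsilon$ still satisfies $h^\epsilon(\theta+\pi)+h^\epsilon(\theta)=1$ and $(h^\epsilon)''+h^\epsilon\ge 0$, hence is the support function of a constant width shape $K^\epsilon$ with smooth boundary, for which the formula above is elementary. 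Since $h\in C^{1,1}(\es)$, both $h^\epsilon\to h$ and $(h^\epsilon)'\to h'$ uniformly, so $K^\epsilon\to K$ in the Hausdorff distance and $A(K^\epsilon)\to A(K)$; meanwhile two integrations by parts give $\int_0^{2\pi}h^\epsilon\big((h^\epsilon)''+h^\epsilon\big)d\theta=\int_0^{2\pi}\big((h^\epsilon)^2-((h^\epsilon)')^2\big)d\theta$, which converges to $\int_0^{2\pi}\big(h^2-(h')^2\big)d\theta=\int_0^{2\pi}h(h''+h)\,d\theta$, the last equality again by parts and legitimate because $h'$ is Lipschitz. Passing to the limit completes the proof.
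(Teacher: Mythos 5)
Your proposal is correct and follows essentially the same route as the paper: the paper's proof is exactly the one-line application of Green's theorem to the Lipschitz parametrization $\gamma=hu+h'u'$ followed by the algebraic simplification $\gamma^1(\gamma^2)'-\gamma^2(\gamma^1)'=h(h''+h)$. The extra care you take in justifying Green's theorem for the merely Lipschitz, possibly locally constant $\gamma$ (via the winding argument or mollification) addresses a point the paper simply asserts, and is a worthwhile addition rather than a divergence.
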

\begin{proof}
We will employ the parametrization $\gamma=(\gamma^1,\gamma^2)$ discussed above. As $\gamma$ is Lipschitz continuous and parametrizes $\partial K$ counterclockwise, Green's theorem gives 
$$
A(K)=\frac{1}{2}\int^{2\pi}_0\left(\gamma^1(\gamma^2)'-\gamma^2(\gamma^1)'\right)d\theta=\frac{1}{2}\int^{2\pi}_0h\left(h''+h\right)d\theta.
$$
\end{proof}
\begin{rem}
It is sometimes useful to integrate by parts and express the area of $K$ as 
\be\label{IntByPartsArea}
A(K)=\frac{1}{2}\int^{2\pi}_0(h^2-h'^2)d\theta.
\ee
\end{rem}
\begin{ex}\label{AreaRegReul}
Suppose $N\ge 3$ is odd and $K$ is the $N$--sided regular Reuleaux triangle with support function $h$ given in example \ref{RegReulEx}. The area of $K$ is 
\begin{align*}
\frac{1}{2}\int^{2\pi}_0h(h''+h)d\theta&=\frac{1}{2}\sum^{2N}_{k=1}\int^{k\pi/N}_{(k-1)\pi/N}h(h''+h)d\theta\\
&=\frac{1}{2}\sum_{\text{$k$ even}}\int^{k\pi/N}_{(k-1)\pi/N}(1-x_k\cdot u(\theta)) d\theta\\
&=\frac{1}{2}\sum_{\text{$k$ even}}\left\{\frac{\pi}{N}-\frac{\left(\sin\left(\frac{k\pi}{N}\right)-\sin\left(\frac{(k-1)\pi}{N}\right)\right)^2+\left(\cos\left(\frac{k\pi}{N}\right)-\cos\left(\frac{(k-1)\pi}{N}\right)\right)^2}{2\sin\left(\frac{\pi}{N}\right)}\right\}\\
&=\frac{1}{2}\sum_{\text{$k$ even}}\left\{\frac{\pi}{N}-\frac{1-\cos\left(\frac{\pi}{N}\right)}{\sin\left(\frac{\pi}{N}\right)} \right\}\\
&=\frac{\pi}{2}\left(1-\frac{1-\cos\left(\frac{\pi}{N}\right)}{\frac{\pi}{N}\sin\left(\frac{\pi}{N}\right)}\right).
\end{align*}  
It is routine to check that this expression increases in $N$. Therefore, the Reuleaux triangle has the least area among all regular Reuleaux polygons. 
\end{ex}

\section{Approximation by Reuleaux polygons} 
A crucial step in our proof of the Blaschke--Lebesgue theorem is that each shape of constant width can be closely approximated by a Reuleaux polygon.   The following assertion is originally due to Blaschke \cite{MR1511839}.  See also Theorem 6 of Kallay's paper \cite{MR350618} for a related assertion.  
\begin{prop}\label{ApproxProp}
Suppose $K$ is a constant width shape with support function $h$ and $\epsilon>0$. There is a Reuleaux polygon $K_\epsilon$ with support function $h_\epsilon$ such that 
$$
|h(\theta)-h_\epsilon(\theta)|\le \epsilon\quad\text{and}\quad|h'(\theta)-h'_\epsilon(\theta)|\le\epsilon
$$
for each $\theta\in \R$. 
\end{prop}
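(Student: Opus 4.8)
The plan is to work entirely with the curvature density $\rho:=h''+h$. By Corollary~\ref{2ndDeriveEst} and Proposition~\ref{ConstantWidthConstraint}, $\rho\in L^\infty(\mathbb S)$, $0\le\rho\le1$ a.e., and $\rho(\theta)+\rho(\theta+\pi)=1$; moreover $\int_0^{2\pi}\rho(\theta)u(\theta)\,d\theta=\int_0^{2\pi}\gamma'(\theta)\,d\theta=0$ by \eqref{gammaDerivative}, since $\gamma$ is closed. Conversely, any $\rho\in L^\infty(\mathbb S)$ with these three properties arises this way: solving $h''+h=\rho$ on $\mathbb S$ is possible exactly because of the integral condition, the homogeneous solutions $\cos\theta,\sin\theta$ are $\pi$-antiperiodic so that every periodic solution satisfies $h(\theta)+h(\theta+\pi)=1$ automatically, and then the converse to Proposition~\ref{ConstantWidthConstraint} shows $h$ is the support function of a constant width body $K$. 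Such a $K$ is a Reuleaux polygon precisely when $\rho$ is (a.e.) a step function valued in $\{0,1\}$: on an interval where $\rho\equiv0$ one has $h=p\cdot u(\theta)$ and so $\gamma\equiv p$, a vertex, while on an interval where $\rho\equiv1$ one has $h=1+q\cdot u(\theta)$ and so $\gamma(\theta)=q+u(\theta)$, an arc of the unit circle. Thus the problem reduces to approximating $\rho$ by $\{0,1\}$-valued step functions $\rho_N$ that still satisfy $\rho_N(\theta)+\rho_N(\theta+\pi)=1$ and $\int_0^{2\pi}\rho_N u\,d\theta=0$, in a sense strong enough to force $C^1$ convergence of the associated support functions.

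I would first regularize: for $\delta\in(0,1)$ set $\rho_\delta:=(1-\delta)\rho+\tfrac\delta2$, which still satisfies the three conditions but now lies in $[\delta/2,\,1-\delta/2]$. Its support function $h_\delta$ satisfies $\|h_\delta-h\|_{C^1}=O(\delta)$, since the difference solves $w''+w=\delta(\tfrac12-\rho)$ with right-hand side orthogonal to $\operatorname{span}\{\cos,\sin\}$, so that $\|w\|_{H^2}\le C\|\delta(\tfrac12-\rho)\|_{L^2}$ by the spectral gap of $\partial_\theta^2+1$ on $\operatorname{span}\{\cos,\sin\}^\perp$, together with $H^2(\mathbb S)\hookrightarrow C^1(\mathbb S)$. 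Hence it is enough to treat a constant width body whose density $\rho$ is pinched into $[\delta/2,1-\delta/2]$.

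Next, partition $[0,\pi)$ into $N$ equal intervals $I_j$. On each $I_j$ I replace $\rho$ by the indicator of a single subinterval $[a_j,b_j]\subset I_j$ chosen so that $\int_{a_j}^{b_j}u(\theta)\,d\theta=\int_{I_j}\rho(\theta)u(\theta)\,d\theta$, and then extend $\rho_N$ to $[\pi,2\pi)$ by $\rho_N(\theta)=1-\rho_N(\theta-\pi)$ and $2\pi$-periodically. Summing the per-interval identities gives $\int_0^{2\pi}\rho_N u\,d\theta=\int_0^{2\pi}\rho u\,d\theta=0$, so $\rho_N$ meets the solvability condition and yields an honest Reuleaux polygon $K_N$ as above. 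Producing the subinterval $[a_j,b_j]$ is the crux. I would start from the block with the prescribed mass $\int_{I_j}\rho$ and centroid $\int_{I_j}\theta\rho\,d\theta/\int_{I_j}\rho$; a short computation using $\rho\in[\delta/2,1-\delta/2]$ shows that block lies inside $I_j$ with margin of order $\delta/N$, and — since it reproduces the zeroth and first moments of $\rho$ on $I_j$ — it matches $\int_{I_j}\rho u$ up to an error of size $O(N^{-3})$ coming from the curvature of $\theta\mapsto u(\theta)$. Since the map $(a,b)\mapsto\int_a^b u(\theta)\,d\theta$ has Jacobian $-\sin(b-a)$, of size $\gtrsim\delta/N$ for blocks of our lengths, this residual can be killed by an $O(\delta^{-1}N^{-2})$ correction of $(a,b)$; once $N$ is large compared with $\delta^{-2}$, the corrected block still sits in $I_j$.

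Finally, among the periodic solutions of $h_N''+h_N=\rho_N$ (which differ by translations of $K_N$) I take $h_N:=h_\delta+w_N$, where $w_N\perp\operatorname{span}\{\cos,\sin\}$ solves $w_N''+w_N=\rho_N-\rho_\delta$ — a legitimate choice, since $\rho_N-\rho_\delta\perp\operatorname{span}\{\cos,\sin\}$ and $w_N(\theta)+w_N(\theta+\pi)\equiv0$. The per-interval matching and uniform continuity of test functions give $\rho_N\rightharpoonup\rho_\delta$ weak-$\ast$ in $L^\infty(\mathbb S)$, hence $w_N\rightharpoonup0$ in $H^2$; since $\{w_N\}$ is bounded in $H^2(\mathbb S)$, which embeds compactly into $C^1(\mathbb S)$, it is precompact in $C^1$, and any $C^1$ limit point solves the homogeneous equation while being orthogonal to $\operatorname{span}\{\cos,\sin\}$, so it vanishes. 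Therefore $\|h_N-h_\delta\|_{C^1}\to0$, and choosing $\delta$ with $\|h_\delta-h\|_{C^1}<\epsilon/2$ and then $N$ with $\|h_N-h_\delta\|_{C^1}<\epsilon/2$ produces the desired pair $K_\epsilon=K_N$, $h_\epsilon=h_N$. The one genuinely delicate point is the exact identity $\int_0^{2\pi}\rho_N u\,d\theta=0$ — without it no periodic $h_N$ exists at all — and it is precisely to make this correction feasible that the regularization (to buy room inside each $I_j$) and the quantitative implicit-function step are both needed.
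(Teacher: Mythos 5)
Your argument is correct, but it reaches the approximating polygon by a genuinely different route from the paper's. The paper works directly with the boundary curve: after the same initial regularization (replacing $h$ by $(h+\delta)/(1+2\delta)$ so that $h''+h\ge 2\delta/(1+2\delta)>0$ and $\gamma$ is injective), it partitions $[0,\pi]$ into $n$ equal pieces and, on each piece, replaces the boundary between $\gamma(\theta_{i-1})$ and $\gamma(\theta_i)$ by an arc of the unit circle through those two points on the appropriate side; the center $z_i$ and touching angles $\phi_i\le\psi_i$ come from elementary geometry, and the bound $|h-h_\epsilon|\le\pi/n$ follows from the Lipschitz estimate \eqref{gammaLipEst}. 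You instead recast everything in terms of the density $\rho=h''+h$ and approximate it by $\{0,1\}$-valued step functions subject to the symmetry and closing conditions. In fact the two constructions produce essentially the same polygon: the paper's equations $|z_i-\gamma(\theta_{i-1})|=|z_i-\gamma(\theta_i)|=1$ amount, via \eqref{zeyeConditions} and \eqref{gammaDerivative}, to exactly your per-interval moment equation $\int_{\phi_i}^{\psi_i}u'\,d\theta=\int_{\theta_{i-1}}^{\theta_i}\rho\, u'\,d\theta$, solved geometrically rather than by a quantitative inverse function theorem, with the containment $\theta_{i-1}\le\phi_i\le\psi_i\le\theta_i$ read off from the picture rather than from a margin estimate. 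What the paper's route buys is elementarity and an explicit rate; what yours buys is a clean conceptual picture (constant-width bodies correspond to densities $0\le\rho\le1$ with $\rho(\theta)+\rho(\theta+\pi)=1$ and $\rho\perp\operatorname{span}\{\cos,\sin\}$, Reuleaux polygons to the bang--bang ones) at the cost of Fredholm solvability, the $H^2$ estimate, and a compactness argument that yields no rate. A few quantitative slips, none fatal since you only need ``$N$ large relative to $\delta$'': the worst-case margin of the mass-and-centroid block inside $I_j$ is of order $\delta m_j\gtrsim\delta^2/N$, not $\delta/N$ (take $\rho_\delta$ close to $\delta/2$ on $I_j$), so fitting the $O(\delta^{-1}N^{-2})$ correction requires $N\gtrsim\delta^{-3}$ rather than $\delta^{-2}$; the identity $\int_0^{2\pi}\gamma'\,d\theta=0$ gives $\int\rho\, u'\,d\theta=0$, which is equivalent to, but not literally equal to, $\int\rho\, u\,d\theta=0$; and after the correction the zeroth moments of $\rho_N$ and $\rho_\delta$ on $I_j$ agree only up to $O(\delta^{-1}N^{-2})$, which is still enough for the weak-$\ast$ convergence you invoke but deserves a sentence.
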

\begin{proof}
1. By replacing $h$ with 
$$
h_\delta=\frac{h+\delta}{1+2\delta}
$$
for  $\delta>0$ and small, we may suppose that the corresponding parametrization $\gamma$ is injective.  Indeed, it is routine to check that $h_\delta$ satisfies the hypotheses of Proposition \ref{KallayProp} with 
$$
h''_\delta(\theta)+h_\delta(\theta)=\frac{h''(\theta)+h(\theta)+2\delta}{1+2\delta}\ge \frac{2\delta}{1+2\delta}>0
$$
for almost every $\theta$. Part $(iv)$ of Proposition \ref{gammaMultiPart} gives that the parametrization associated with $h_\delta$ is injective.  Moreover, 
$$
|h(\theta)-h_\delta(\theta)|\le \frac{\delta}{1+2\delta}|2h(\theta)-1|
$$ 
and
$$
|h'(\theta)-h'_\delta(\theta)|\le \frac{2\delta}{1+2\delta}|h'(\theta)|
$$ 
for all $\theta$. Since $h$ and $h'$ are bounded functions, $h_\delta$ is a $C^1$ approximation of $h$ with the desired properties mentioned above. Consequently, we will suppose that $\gamma$ is injective. 
\par 2. Suppose $n\in \N$ with 
$$
\frac{\pi}{n}\le \epsilon 
$$
and set 
$$
\theta_i=\frac{i \pi}{n}
$$
for $i=0,\dots, n$.  For each $i=1,\dots, n$, we consider the 4--tuple of points 
$$
\left\{\gamma(\theta_i), \gamma(\theta_{i-1}), \gamma(\theta_i+\pi), \gamma(\theta_{i-1}+\pi)\right\}\subset \partial K.
$$
By our assumption that $\gamma$ is injective, these are four distinct points.  
\begin{figure}[h]
         \centering
         \includegraphics[width=.7\textwidth]{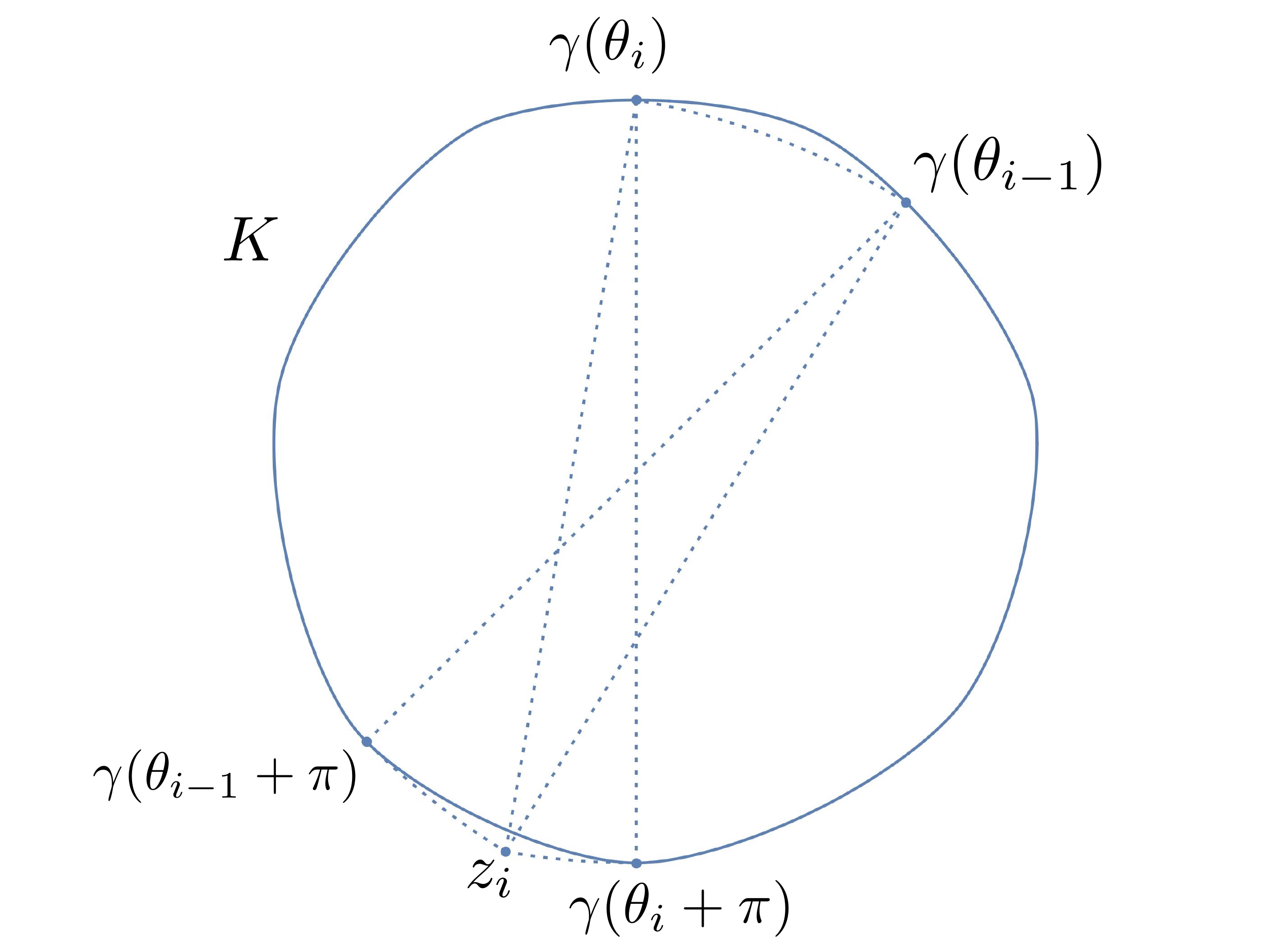}
                   \caption{This diagram illustrates how we can construct a Reuleaux polygon which approximates $K$. We simply partition the interval $0=\theta_0<\dots<\theta_n=\pi$ and replace the part of $\partial K $ between $\gamma(\theta_{i-1})$ and $\gamma(\theta_i)$ with an arc of a circle as shown in this diagram. Then we choose vertices on the other side of $K$ to ensure the resulting curve has constant width.}
                   \label{DensityDiag}
\end{figure}

\par There are two solutions $z\in \R^2$ for which 
$$
|z-\gamma(\theta_i)|=|z-\gamma(\theta_{i-1})|=1.
$$
Let $z_i$ be the solution $z$ which additionally satisfies $|z-\gamma(\theta_i+\pi)|\le 1$ and $|z- \gamma(\theta_{i-1}+\pi)|\le 1$.
See Figure \ref{DensityDiag}. Also observe that 
\be\label{zeyeConditions}
\gamma(\theta_i)-z_i=u(\psi_i)\quad\text{and}\quad \gamma(\theta_{i-1})-z_i=u(\phi_{i})
 \ee
for angles $\phi_i$ and $\psi_i$ with
 $$
 \theta_{i-1}\le \phi_{i}\le \psi_i \le \theta_{i}.
 $$
 
 \par 3. Define
 $$
 h_\epsilon(\theta)
 =\begin{cases}
 \gamma(\theta_{i-1})\cdot u(\theta),  &\theta\in [\theta_{i-1}, \phi_{i}]\\
 1+z_i\cdot u(\theta),  &\theta\in [\phi_{i}, \psi_{i}]\\
\gamma(\theta_i)\cdot u(\theta),  & \theta\in [\psi_{i}, \theta_{i}]
 \end{cases}
 $$
 for $\theta\in [0,\pi]$ and extend $h_\epsilon$ to $[\pi,2\pi]$ by setting 
 $$
 h_\epsilon(\theta+\pi)=1-h_\epsilon(\theta)\quad \theta\in [0,\pi].
 $$
It is straightforward to employ \eqref{zeyeConditions} and show $h_\epsilon$ extends to a $2\pi-$periodic function which is continuously differentiable on $\R$.  As $h''_\epsilon+h_\epsilon$ alternatives between $0$ and $1$ on successive intervals, $h_\epsilon$ is the support function of a Reuleaux polygon. 

\par Suppose $\theta\in [0,\pi]$ and choose $i=1,\dots, n$ such that $\theta\in [\theta_{i-1},\theta_i]$. If $\theta\in [\theta_{i-1},\phi_i]$, then 
\begin{align*}
|h(\theta)-h_\epsilon(\theta)|&=|\gamma(\theta)\cdot u(\theta)-\gamma(\theta_{i-1})\cdot u(\theta)|\\
&=|(\gamma(\theta)-\gamma(\theta_{i-1}))\cdot u(\theta)|\\
&\le |\gamma(\theta)-\gamma(\theta_{i-1})|\\
&\le \theta-\theta_{i-1}\\
&\le \frac{\pi}{n}\\
&\le \epsilon.
\end{align*}
Here we used the Lipschitz estimate \eqref{gammaLipEst}.  By virtually the same argument, $|h(\theta)-h_\epsilon(\theta)|\le \epsilon$ when $\theta\in [\psi_{i},\theta_i]$.  Moreover, if $\theta\in [\phi_{i},\psi_i]$,
\begin{align*}
|h(\theta)-h_\epsilon(\theta)|&=|\gamma(\theta)\cdot u(\theta)-(1+z_i\cdot u(\theta))|\\
&=|\gamma(\theta)\cdot u(\theta)-(u(\theta)+z_i)\cdot u(\theta))|\\
&\le |\gamma(\theta)-(u(\theta)+z_i)|\\
&\le|\gamma(\theta)-\gamma(\theta_i)|+ |\gamma(\theta_i)-(u(\theta)+z_i)|\\
&=|\gamma(\theta)-\gamma(\theta_i)|+ |u(\psi_i)-u(\theta)|\\
&\le \theta_i-\theta+\theta-\psi_i\\
&\le \frac{\pi}{n}\\
&\le \epsilon.
\end{align*}
We conclude 
$$
|h(\theta)-h_\epsilon(\theta)|\le\epsilon
$$
for $\theta\in [0,\pi]$. Since $h(\theta+\pi)=1-h(\theta)$ and $h_\epsilon(\theta+\pi)=1-h_\epsilon(\theta)$ for $\theta\in [0,\pi]$, the estimate above also holds for all $\theta\in [0,2\pi]$.  Finally, we note that the bound 
$$
|h'(\theta)-h'_\epsilon(\theta)|\le\epsilon
$$
for all $\theta\in [0,2\pi]$ follows very similarly. We leave the details to the reader. 
\end{proof}

\begin{figure}[h]
         \centering
         \includegraphics[width=.7\textwidth]{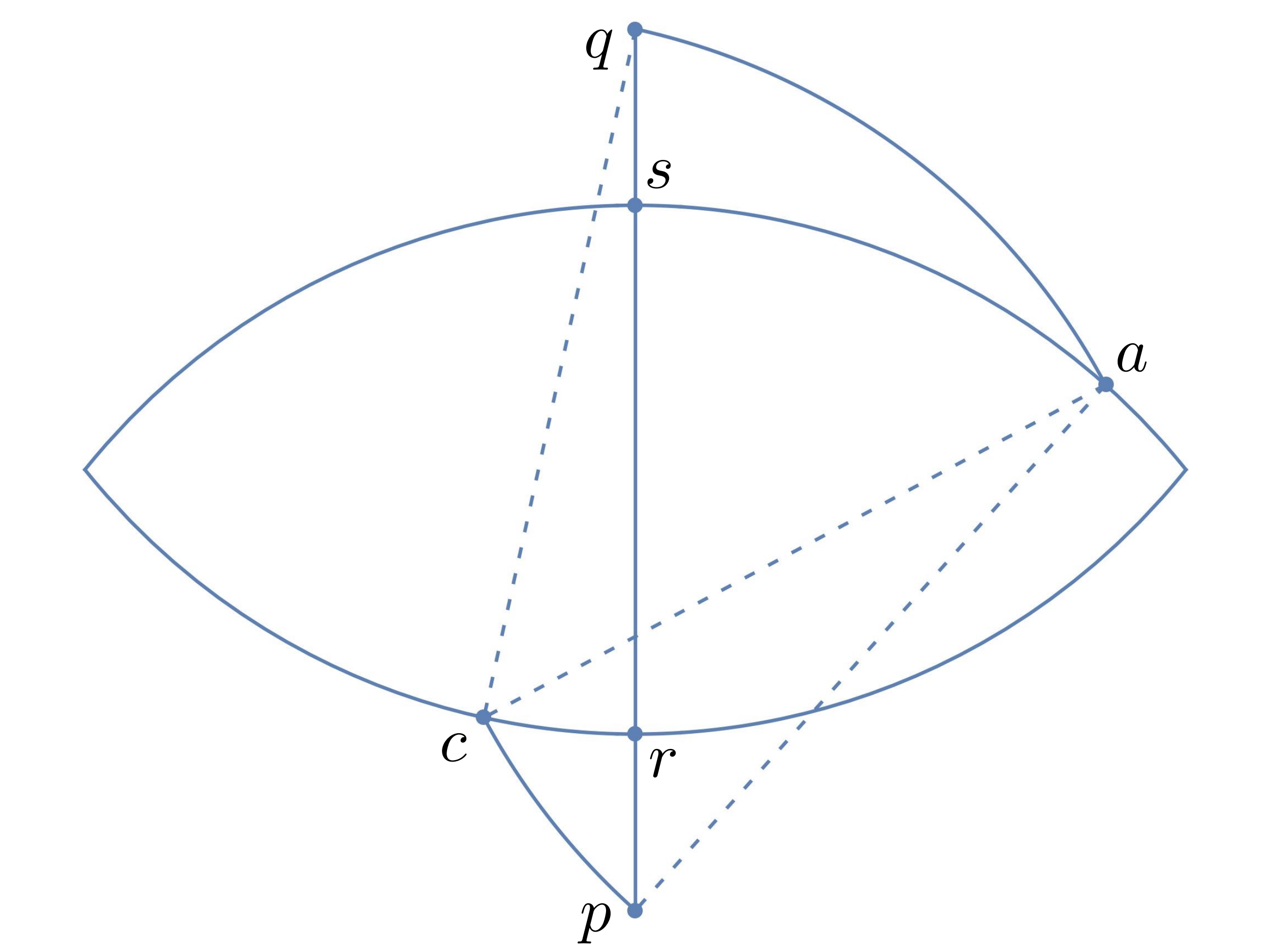}
                   \caption{This is the reference which accompanies Lemma \ref{GeometricLemma}. Lemma \ref{GeometricLemma} asserts that the area of the curvilinear triangle  $\Delta(asq)$ with vertices $a,s,$ and $q$ is at least as much as the curvilinear triangle $\Delta(crp)$ with vertices $c,r,$ and $p$ provided that  the length of arc joining $a$ and $s$ is no less than the length of the arc joining $c$ and $r$.}\label{Lemma42Fig}
\end{figure}
\par In addition, we will argue that for any (possibly irregular) Reuleaux polygon the Reuleaux triangle has least area. This assertion was verified in the solution to problem 7.20 in \cite{MR0123962} and we shall follow this solution closely below.  To this end, we will first need to establish a few technical lemmas. Let us denote $C(x)$ and $D(x)$ for the circle and open disk of radius one centered at $x$, respectively. If $y,z\in C(x)$ satisfy $|y-z|<2$, we will write  $\overset{\frown}{yz}$ for the shorter segment within $C(x)$ which joins $y$ and $z$; by abuse of notation, we will also write  $\overset{\frown}{yz}$  for the length of this arc. In addition, $\Delta(abc)$ will denote a curvilinear triangle bounded by line segments or arcs of circles of radius one with vertices given by $a,b$ and $c$. 
\begin{lem}\label{GeometricLemma}
Assume $p,q,r,s\in \R^2$ with $|p-q|<2,$
and that $s$ and $r$ are on the line segment between $p$ and $q$ with 
$$
|s-p|=|r-q|=1.
$$ 
Suppose $a\in C(p)\cap D(q)$ and $c\in C(q)\cap D(p)$ with
$$
|a-c|=1.
$$
(i) If $\overset{\frown}{as}\ge \overset{\frown}{cr}$, then $A(\Delta(asq))\ge A(\Delta(crp))$. Refer to Figure \ref{Lemma42Fig}.  \\
(ii) The area difference $A(\Delta(asq))- A(\Delta(crp))$ is nondecreasing in the length difference $\overset{\frown}{as}-\overset{\frown}{cr}$.
\end{lem}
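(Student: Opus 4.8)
\noindent The plan is to reduce both assertions to one closed‑form expression for the area difference.

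\smallskip
\noindent\textbf{Normalization and the shape of the two curvilinear triangles.} Place $p$ at the origin and $q=(d,0)$ with $d:=|p-q|$; since $s,r$ lie on $\overline{pq}$ at distance $1$ from its endpoints, $d\in[1,2)$ and $s=(1,0)$, $r=(d-1,0)$. As $a\in C(p)\cap D(q)$ we may write $a=u(\alpha)$ with $|\alpha|<\arccos(d/2)$ and $\overset{\frown}{as}=|\alpha|$; after a reflection in $\overline{pq}$ assume $\alpha\ge 0$, so $\alpha=\overset{\frown}{as}$. Writing $c=q-u(\gamma)$ with $\overset{\frown}{cr}=|\gamma|$, the hypothesis $|a-c|=1$ forces $a$ and $c$ onto opposite sides of $\overline{pq}$ (no same‑side pair in the lens $D(p)\cap D(q)$ can be at distance $1$), so $\gamma\ge 0$ as well, and $\gamma<\arccos(d/2)$. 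Because $|a-p|=|a-c|=1$ and $|c-q|=|c-a|=1$, the curvilinear triangle $\Delta(asq)$ is bounded by the unit arc $\overset{\frown}{as}\subset C(p)$, the segment $\overline{sq}\subset\overline{pq}$, and the unit arc joining $a$ to $q$ inside $C(c)$; likewise $\Delta(crp)$ is bounded by $\overset{\frown}{cr}\subset C(q)$, $\overline{rp}$, and the unit arc joining $c$ to $p$ inside $C(a)$.

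\smallskip
\noindent\textbf{Area formula.} Compute the two areas directly by Green's theorem, $A=\tfrac12\oint(x\,dy-y\,dx)$, since each boundary piece is a straight segment or a unit arc and the line integral over such an arc is elementary (equivalently, glue each curvilinear triangle to the two circular sectors spanning its arcs, each of area half its arc length, to reduce it to a rectilinear region). Using the elementary angle identity that the two outer arcs satisfy $(\text{arc }a\to q\text{ in }C(c))-(\text{arc }c\to p\text{ in }C(a))=\overset{\frown}{as}-\overset{\frown}{cr}$ (an angle chase around $p,a,c,q$), the computation collapses to
\[
F:=A(\Delta(asq))-A(\Delta(crp))=d\,(\sin\alpha-\sin\gamma)-\sin(\alpha-\gamma).
\]

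\smallskip
\noindent\textbf{Invoking $|a-c|=1$.} Expanding $|a-c|^2=|u(\alpha)+u(\gamma)-(d,0)|^2=1$ gives $2d(\cos\alpha+\cos\gamma)-2\cos(\alpha-\gamma)=1+d^2$, i.e.\ in half‑angle form
\[
d\cos\tfrac{\alpha+\gamma}{2}-\cos\tfrac{\alpha-\gamma}{2}=\frac{d^2-1}{4\cos\frac{\alpha-\gamma}{2}}.
\]
Since sum‑to‑product rewrites the area formula as $F=2\sin\tfrac{\alpha-\gamma}{2}\bigl(d\cos\tfrac{\alpha+\gamma}{2}-\cos\tfrac{\alpha-\gamma}{2}\bigr)$, substituting the previous display yields the closed form
\[
F=\frac{|p-q|^2-1}{2}\,\tan\!\left(\frac{\overset{\frown}{as}-\overset{\frown}{cr}}{2}\right).
\]
Now $|p-q|\ge 1$ makes the first factor $\ge 0$, while $\tfrac12(\overset{\frown}{as}-\overset{\frown}{cr})\in(-\tfrac{\pi}{2},\tfrac{\pi}{2})$, so $F\ge 0$ whenever $\overset{\frown}{as}\ge\overset{\frown}{cr}$, proving (i), and $F$ is a nondecreasing function of $\overset{\frown}{as}-\overset{\frown}{cr}$, proving (ii); it is strictly increasing unless $|p-q|=1$.

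\smallskip
\noindent\textbf{Main obstacle.} The only genuine work is the area computation: correctly identifying the two unit‑arc sides of each curvilinear triangle and pushing the Green's‑theorem (or sector‑gluing) bookkeeping far enough to expose the cancellations giving $F=d(\sin\overset{\frown}{as}-\sin\overset{\frown}{cr})-\sin(\overset{\frown}{as}-\overset{\frown}{cr})$ — in particular, verifying the angle identity used there and the side/orientation facts (that $a$ and $c$ lie on opposite sides of $\overline{pq}$). Everything after that formula is routine trigonometry, and part (i) is in any case immediate from part (ii) once one notes $F=0$ when $\overset{\frown}{as}=\overset{\frown}{cr}$.
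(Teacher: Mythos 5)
Your proof is correct, but it takes a genuinely different route from the paper's. The paper proves part $(i)$ by a containment argument: since $|q-s|=|r-p|$ and $\overset{\frown}{as}\ge \overset{\frown}{cr}$, a congruent copy of $\Delta(crp)$ can be placed inside $\Delta(asq)$; part $(ii)$ is obtained by iterating this monotone containment for the pairs $(\overline a,a)$ and $(c,\overline c)$. That argument is short and visual but leans heavily on the figures. You instead compute the area difference exactly, and I have checked your two asserted computations: the Green's theorem bookkeeping does collapse to $F=|p-q|(\sin\overset{\frown}{as}-\sin\overset{\frown}{cr})-\sin(\overset{\frown}{as}-\overset{\frown}{cr})$ (the angle chase giving the cancellation of the two outer arc lengths is the identity $(\beta-\gamma)-(\beta-\alpha)=\alpha-\gamma$ in your notation), and the opposite--sides fact holds whenever $|p-q|>1$ (a short minimization over the square $[0,\arccos(|p-q|/2))^2$ confirms that same--side pairs in the lens are at distance strictly less than one; at $|p-q|=1$ the whole configuration degenerates, with $s=q$ and $r=p$, and your formula gives $F\equiv 0$ there anyway). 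The payoff of your approach is a strictly stronger conclusion than the lemma asks for: the closed form $F=\tfrac{1}{2}(|p-q|^2-1)\tan\bigl(\tfrac{1}{2}(\overset{\frown}{as}-\overset{\frown}{cr})\bigr)$ shows the area difference is a function of $|p-q|$ and the arc--length difference alone, nondecreasing in the latter and strictly increasing when $|p-q|>1$, which delivers $(i)$ and $(ii)$ simultaneously. The cost is that the coordinate computation and the orientation checks (which side of $\overline{pq}$ each point lies on, which way each arc is traversed) must be written out carefully, whereas the paper's congruence argument avoids all computation. If you flesh out the two flagged verifications, this stands as a complete and in some ways sharper proof.
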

\begin{proof}
$(i)$ Since $|s-p|=|r-q|=1$ and $q,s,r,p$ are collinear, $|p-q|=|q-s|+1=1+|r-p|$. Thus, $|q-s|=|r-p|$. As $\overset{\frown}{as}\ge \overset{\frown}{cr}$, we can place a curvilinear triangle which is congruent to $\Delta(crp)$ within $\Delta(asq)$. Consequently, $A(\Delta(asq))\ge A(\Delta(crp))$.  See Figure \ref{SecondYB}.  
\begin{figure}[h]
\centering
         \includegraphics[width=.7\textwidth]{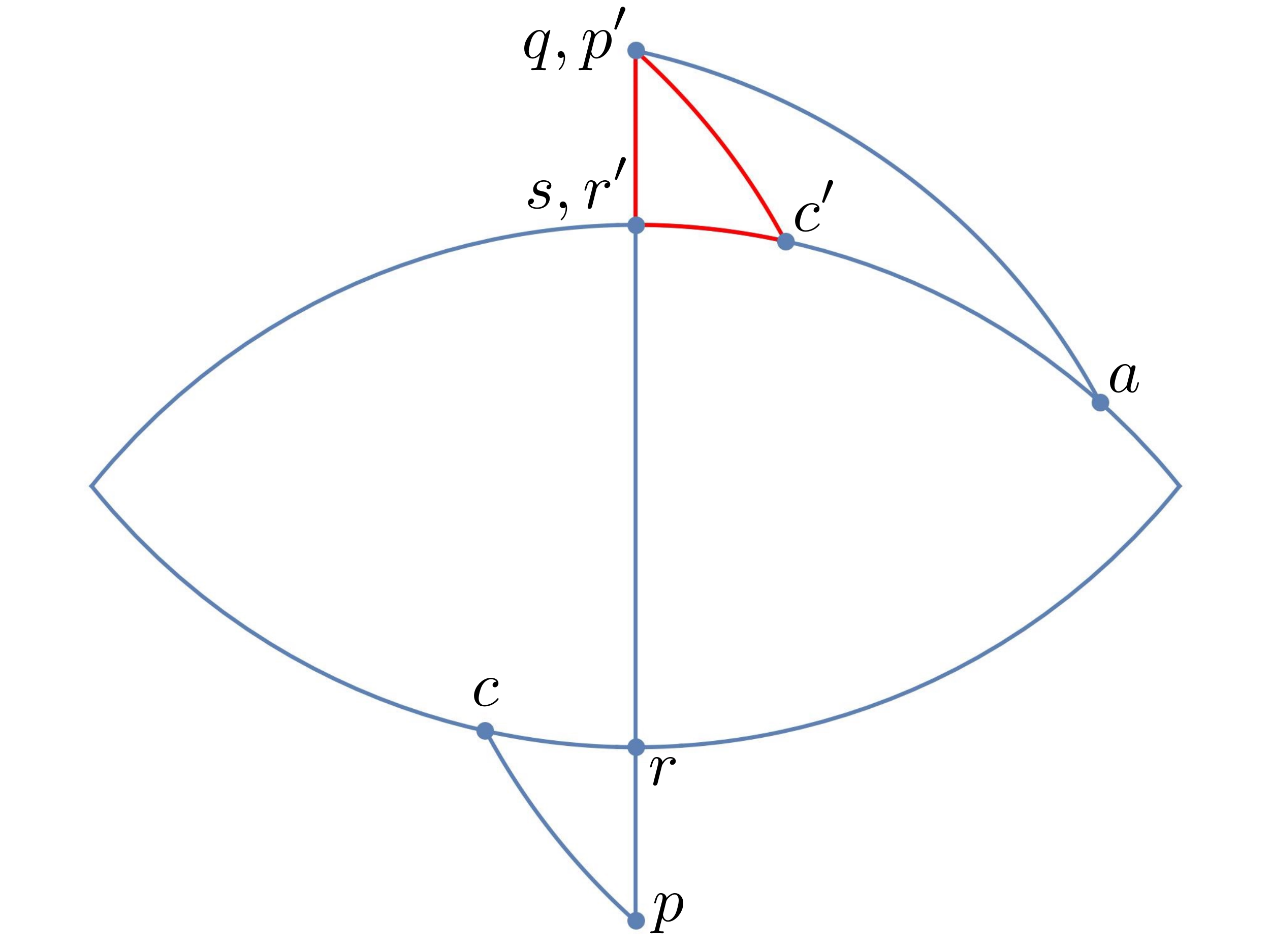}
                   \caption{Here we take a curvilinear triangle $\Delta(c'r'p')$ which is congruent to $\Delta(crp)$ and place it within $\Delta(asq)$. Note in particular, that $c'$ is on the circle of radius one centered at $p$ and that $c'$ is between $a$ and $s$. Also note $r'=s$, and $p'=q$. This implies $A(\Delta(asq))\ge A(\Delta(c'r'p'))=A(\Delta(crp))$. }\label{SecondYB}
\end{figure}
\begin{figure}[h]
\centering
         \includegraphics[width=.7\textwidth]{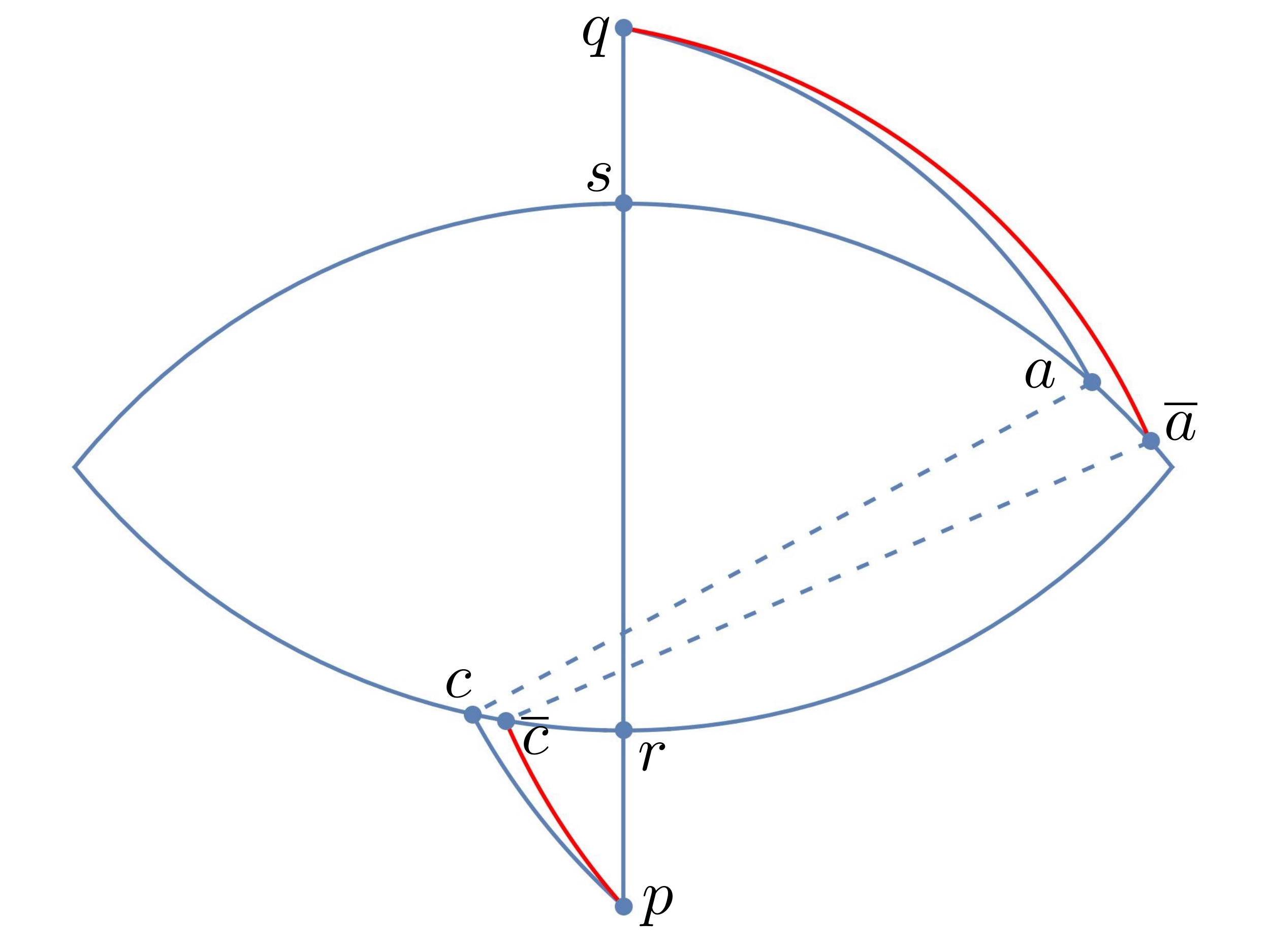}
                   \caption{This diagram helps explain why the area difference $A(\Delta(asq))-A(\Delta(crp))$ increases with the length difference  
                   $\overset{\frown}{ as}-\overset{\frown}{ cr}$. The key observation is that any pair $\overline a, \overline c$ with $\overset{\frown}{ \overline as}-\overset{\frown}{ \overline cr}\ge \overset{\frown}{  as}-\overset{\frown}{  cr}\ge 0$ must be arranged as in this diagram. In particular, $A(\Delta(\overline asq))\ge A(\Delta(asq))$ and  $A(\Delta(\overline crp))\le A(\Delta(crp))$, which implies the asserted monotonicity.}\label{ThirdYB}
\end{figure}
\par $(ii)$ Now suppose we have two other points $\overline a$ and $\overline c$ with $\overline a\in C(p)\cap D(q)$, $\overline c\in C(q)\cap D(p)$, and $|\overline a-\overline c|=1$. Also assume $\overset{\frown}{\overline as}- \overset{\frown}{\overline cr}\ge \overset{\frown}{ as}-\overset{\frown}{ cr}\ge 0$. This is the case provided that 
$$
a\in \overset{\frown}{\overline as}\quad \text{and}\quad \overline{c}\in \overset{\frown}{ cr}.
$$ 
See Figure \ref{ThirdYB}.   As we saw in part $(i)$, $A(\Delta(\overline asq))\ge A(\Delta(asq))$ and $A(\Delta(\overline crp))\le A(\Delta(crp))$. Therefore, 
$$
A(\Delta(\overline asq))-A(\Delta(\overline crp))\ge A(\Delta(asq))- A(\Delta(crp))\ge 0.
$$
\end{proof}
\begin{figure}[h]
\centering
         \includegraphics[width=.7\textwidth]{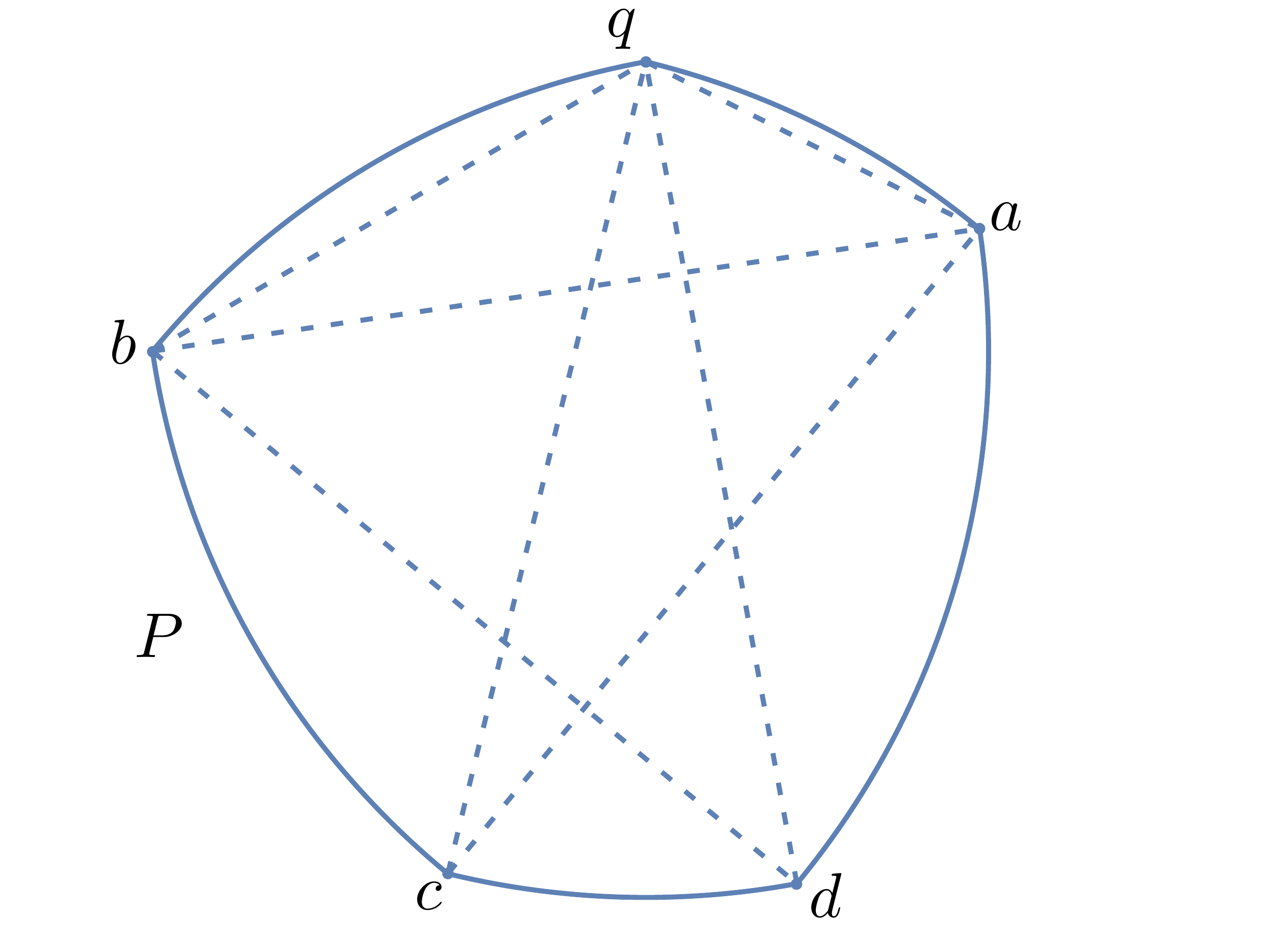}
                   \caption{A Reuleaux polygon $P$ as described in Lemma \ref{DistanceLemma}, which asserts that if $|c-d|\le|a-q|$ and $|c-d|\le |q-b|$, then $|c-d|\le |a-b|$.   }\label{FifthYB}
\end{figure}
\begin{lem}\label{DistanceLemma}
Suppose $P\subset \R^2$ is a Reuleaux polygon with neighboring vertices $c,d$.  Suppose $a,q$ are neighboring vertices of $P$ with 
$$
|a-c|=|q-c|=1
$$
and $q,b$ are neighboring vertices of $P$ with 
$$
|q-d|=|b-d|=1.
$$
If $|c-d|\le|a-q|$ and $|c-d|\le |q-b|$, then $|c-d|\le |a-b|$. 
\end{lem}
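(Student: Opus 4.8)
\section*{Proof proposal}

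The plan is to put $q$ at the origin and identify $\R^2$ with $\C$. Since $|q-c|=|q-d|=1$, write $c=e^{i\sigma}$ and $d=e^{i\tau}$. I would first record the combinatorics of a Reuleaux polygon: if its vertices are $v_1,\dots,v_N$ ($N$ odd) in counterclockwise order, the boundary arc joining $v_j$ and $v_{j+1}$ is a unit arc centered at $v_{j+(N+1)/2}$ (indices mod $N$). Writing $a=v_j$, $q=v_{j+1}$, $b=v_{j+2}$ — which we may arrange after possibly relabeling $a\leftrightarrow b$ and $c\leftrightarrow d$ simultaneously — this recovers that $\overset{\frown}{aq}$ is the unit arc centered at $c=v_{j+(N+1)/2}$ and $\overset{\frown}{qb}$ the one centered at $d=v_{j+1+(N+1)/2}$, shows that $c$ immediately precedes $d$, and shows that $\overset{\frown}{cd}$ is the unit arc centered at $v_{j+N+1}=q$. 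Letting $\omega,\beta,\delta\in(0,\pi]$ be the central angles of $\overset{\frown}{cd},\overset{\frown}{aq},\overset{\frown}{qb}$, we then have $\tau=\sigma+\omega$, and since a convex curve traversed counterclockwise keeps its center of curvature on the left, $a=c(1-e^{-i\beta})$ and $b=d(1-e^{i\delta})$. Hence $|a-q|=2\sin(\beta/2)$, $|q-b|=2\sin(\delta/2)$, and $|c-d|=2\sin(\omega/2)$; and since $P$ has constant width $1$ and therefore diameter $1$, each of these is $\le 1$, i.e. $\omega,\beta,\delta\le\pi/3$.

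Next I would compute $a-b=e^{i\sigma}(1-e^{-i\beta})-e^{i\tau}(1-e^{i\delta})$. Substituting $\tau=\sigma+\omega$, pulling out $e^{i\sigma}$, and regrouping the four exponentials,
$$
a-b=e^{i\sigma}\Big[(1-e^{i\omega})+e^{-i\beta}\big(e^{i(\omega+\beta+\delta)}-1\big)\Big].
$$
The first bracketed term is a rotation of $c-d$, and the second is a rotation of a chord of the unit circle about $q$ subtending the angle $\omega+\beta+\delta\le\pi$; thus $|1-e^{i\omega}|=2\sin(\omega/2)=|c-d|$ and $|e^{i(\omega+\beta+\delta)}-1|=2\sin\!\big(\tfrac{\omega+\beta+\delta}{2}\big)$. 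By the reverse triangle inequality,
$$
|a-b|\ \ge\ 2\sin\!\Big(\tfrac{\omega+\beta+\delta}{2}\Big)-2\sin\!\Big(\tfrac{\omega}{2}\Big),
$$
so it is enough to prove $\sin\!\big(\tfrac{\omega+\beta+\delta}{2}\big)\ge 2\sin(\omega/2)$; indeed this gives $|a-b|\ge 4\sin(\omega/2)-2\sin(\omega/2)=2\sin(\omega/2)=|c-d|$.

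For the trigonometric inequality, write $w=\omega/2$, $x=\beta/2$, $y=\delta/2$, all in $(0,\pi/6]$ by the diameter bound. The hypotheses $|c-d|\le|a-q|$ and $|c-d|\le|q-b|$ read $\sin w\le\sin x$ and $\sin w\le\sin y$, hence $w\le x$ and $w\le y$ since $\sin$ is increasing on $[0,\pi/2]$. Therefore $3w\le w+x+y\le\pi/2$, so $\sin(w+x+y)\ge\sin(3w)$; and $\sin(3w)-2\sin w=\sin w\,(1-4\sin^2 w)=\sin w\,(1-2\sin w)(1+2\sin w)\ge0$ because $w\le\pi/6$ forces $2\sin w\le1$. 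Chaining these gives $\sin(w+x+y)\ge 2\sin w$, which is the claim.

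I expect the main obstacle to be the first paragraph: fixing the orientation conventions so the rotation angles defining $a$ and $b$ carry the correct signs (this is where convexity of $P$ and the opposite-vertex rule for Reuleaux polygons are used), and then coaxing $a-b$ into the displayed form in which both the chord $c-d$ and a long chord of the circle centered at $q$ appear explicitly. Once that identity is in hand, the rest reduces to the short estimate above, whose only quantitative input is $\omega\le\pi/3$. It is worth noting that all of the inequalities become equalities exactly when $\omega=\beta=\delta=\pi/3$, i.e. when $P$ is a Reuleaux triangle (with $a=d$ and $b=c$), so the bound $|c-d|\le|a-b|$ is sharp.
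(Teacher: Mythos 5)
Your proof is correct, but it takes a genuinely different route from the paper. The paper's argument is synthetic: it shows that the angle $\angle(aqb)$ in the (straight-sided) triangle $aqb$ exceeds $\pi/3$ --- via $\angle(aqb)=\angle(cqa)+\angle(dqb)-\angle(cqd)$ with $\angle(cqa),\angle(dqb)>\pi/3$ and $\angle(cqd)\le\pi/3$ --- so that $|a-b|$ cannot be the strictly smallest side of that triangle, and since the other two sides are $\ge|c-d|$ by hypothesis, the conclusion follows. You instead parametrize everything by the three central angles $\omega,\beta,\delta$ in $\C$, peel off the chord $c-d$ from $a-b$ by the reverse triangle inequality, and reduce the claim to $\sin\bigl(\tfrac{\omega+\beta+\delta}{2}\bigr)\ge 2\sin\bigl(\tfrac{\omega}{2}\bigr)$, which you settle using only $\omega\le\beta,\delta\le\pi/3$. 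The inputs are the same (the diameter bound forcing all three central angles to be at most $\pi/3$, plus the two hypotheses), and your angle bookkeeping is consistent with the paper's: in your coordinates $\angle(aqb)=\pi-\tfrac{\beta}{2}-\tfrac{\delta}{2}-\omega$, which recovers the paper's angle identity. What the paper's version buys is brevity, at the cost of reading the cyclic arrangement of the rays $qa,qc,qd,qb$ off the figure; what yours buys is a fully explicit computation that makes the equality case ($\omega=\beta=\delta=\pi/3$, the Reuleaux triangle) transparent and yields a quantitative lower bound on $|a-b|$. The one place where you assert rather than prove is the opposite-vertex combinatorics of Reuleaux polygons (the arc joining $v_j$ and $v_{j+1}$ is centered at $v_{j+(N+1)/2}$, hence $\overset{\frown}{cd}$ is centered at $q$) and the orientation convention fixing the signs of the rotations; these are standard and are also used implicitly in the paper's setup, so this is not a gap, but in a self-contained write-up you would want to justify them once.
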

\begin{proof}
For $x,y,z\in \R^2$, we will write $\angle(xyz)$ the angle between $x-y$ and $z-y$.  Since $P$ has diameter one, $|a-q|\le 1$. If $|a-q|=1$, it can be verified that $c=b$, $d=a$, and $P$ is necessarily a Reuleaux triangle. The claim is trivial in this case. Otherwise, we assume $|a-q|<1$. It follows that 
$\angle(qca)< \pi/3$ and in turn that $\angle(cqa)> \pi/3$. Likewise, $\angle(dqb)> \pi/3$. As $\angle(cqd)\le \pi/3$, 
$$
\angle(aqb)=\angle(cqa)+\angle(dqb)-\angle(cqd)> \pi/3.
$$
It follows that the side between $a$ and $b$ of the (standard) triangle with vertices $a,q,b$ is not the smallest side.  See Figure \ref{FifthYB}. As
$|c-d|$ is not larger than the lengths of the other two sides of this triangle, it must be that $|c-d|\le |a-b|$.
\end{proof}

\begin{proof}[Proof of the Blaschke--Lebesgue theorem]
It suffices to show that a Reuleaux triangle $T$ has least area among all Reuleaux polygons. Indeed, suppose $K$ is a constant width curve and $K_\epsilon$ is a Reuleaux polygon with 
$$
A(K_\epsilon)\le A(K)+\epsilon. 
$$
Such a Reuleaux polygon exists by Proposition \ref{ApproxProp} and formula \eqref{IntByPartsArea}.  If $A(T)\le A(K_\epsilon)$, then 
$$
A(T)\le A(K)+\epsilon. 
$$
As this would hold for any $\epsilon>0$,  $A(T)\le A(K)$ and we would then conclude the Blaschke--Lebesgue theorem. 

\par In order to prove the claim, we will argue that that for any $N$-sided Reuleaux polygon $P$, with $N$ odd and $N>3$, there is another Reuleaux polygon $P'$ with $N-2$ sides and having smaller area than $P$.  In finitely many steps, we could then deduce that the Reuleaux triangle has area less than or equal to $P$.  To this end, choose a pair $c,d\in P$ of neighboring vertices for which the distance between $c$ and $d$ is as small as any other pair of neighboring vertices.  Let $q$ the vertex opposite $\overset{\frown}{cd}$. There are a pair of arcs $\overset{\frown}{bq}\subset C(d)$ and $\overset{\frown}{qa}\subset C(c)$ in the boundary of $P$.  There are also two solutions $z$ of  the equations 
$$
|b-z|=|a-z|=1. 
$$
We define $p=z$ to be the solution closer to the arc $\overset{\frown}{cd}$.   See Figure \ref{FourthYB}.
\begin{figure}[h]
\centering
         \includegraphics[width=.8\textwidth]{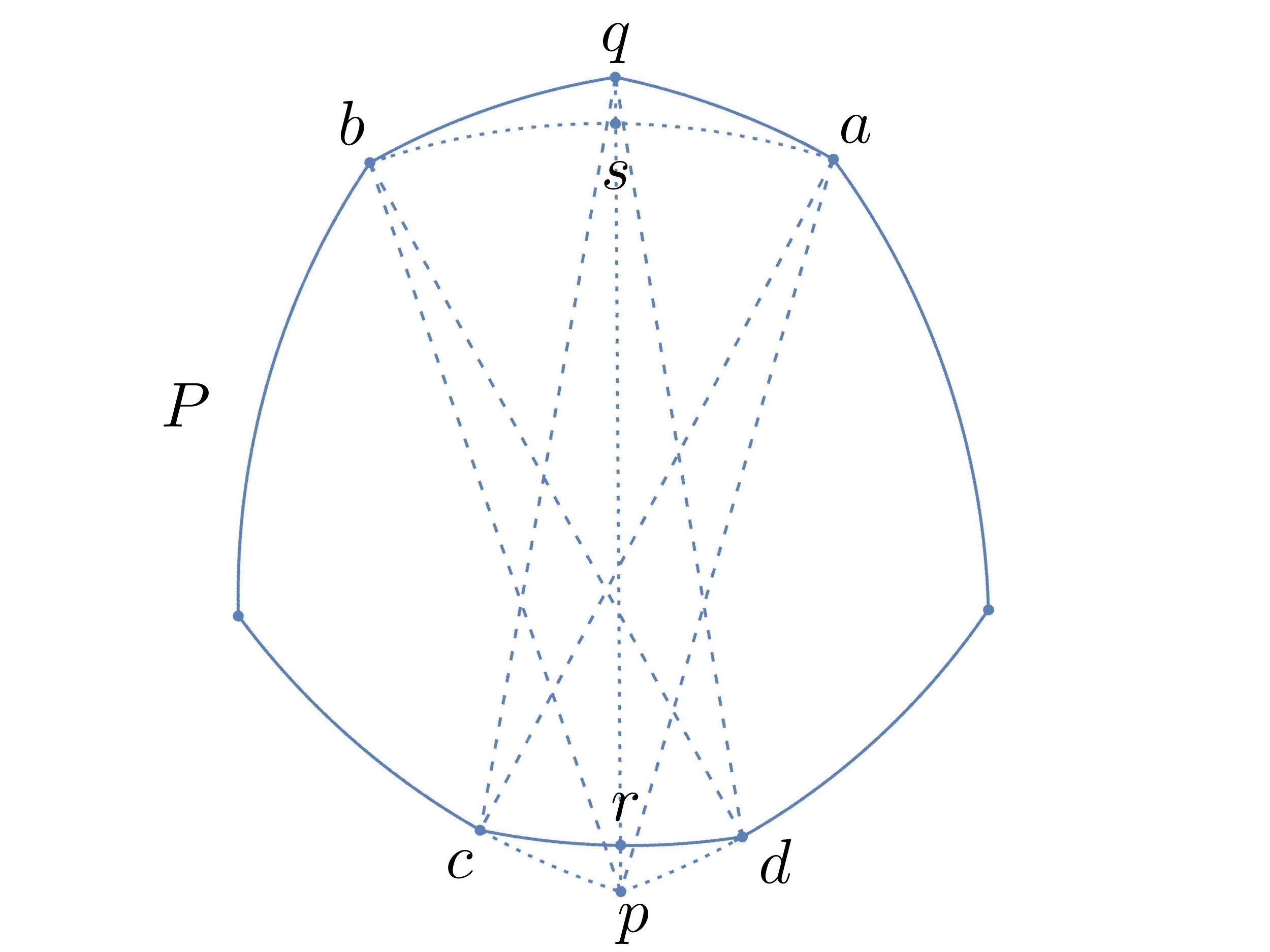}
                   \caption{A Reuleaux polygon $P$ as described in our proof of the Blaschke--Lebesgue theorem. We obtain an auxiliary Reuleaux polygon $P'$  from $P$ by replacing the arc $\overset{\frown}{cd}$ with the two arcs $\overset{\frown}{cp}$ and $\overset{\frown}{pd}$ and by replacing the two arcs $\overset{\frown}{bq}$ and $\overset{\frown}{qa}$ with $\overset{\frown}{ba}$. The key here is that $A(P)\ge A(P')$ and $P'$ has two fewer vertices than $P'$. }\label{FourthYB}
\end{figure}
\par We will construct a new Reuleaux polygon $P'$ from $P$ by replacing 
the arc $\overset{\frown}{cd}$ with the union of two arcs $\overset{\frown}{cp}$ and $\overset{\frown}{pd}$  and by replacing the two arcs 
$\overset{\frown}{bq}$ and $\overset{\frown}{qa}$ with the arc $\overset{\frown}{ba}\subset C(p)$.  See Figure \ref{FourthYB} for a detailed diagram.  It is routine to check that $P'$ has constant width.  Moreover, $p$ is a vertex of $P'$ while $c,d$ and $q$ are no longer vertices. In particular, $P'$ has $N-2$ vertices.  Furthermore, we claim that
\be\label{GeoLemmaClaim}
A(\Delta(asq))+A(\Delta(bsq))\ge A(\Delta(crp))+A(\Delta(drp)),
\ee
which would in turn imply $A(P)\ge A(P')$. Establishing this claim would complete our proof. 

\par By Lemma \ref{DistanceLemma}, $|c-d|\le |a-b|$. In particular, $\overset{\frown}{ab}\ge \overset{\frown}{cd}$.   That is, 
\be\label{TotalLengthIneq}
\overset{\frown}{as}+\overset{\frown}{sb}\ge \overset{\frown}{cr}+ \overset{\frown}{rd}
\ee
for points $s$ and $r$ which lie on the line segment between $p$ and $q$ with $|s-p|=|r-q|=1$. If 
\be\label{TwoGoodDirectionIneq}
\overset{\frown}{as}\ge \overset{\frown}{cr}\quad \text{and}\quad \overset{\frown}{sb}\ge \overset{\frown}{rd},
\ee
then Lemma  \ref{GeometricLemma} $(i)$ implies 
$$
A(\Delta(asq))\ge A(\Delta(crp))\quad \text{and}\quad A(\Delta(bsq))\ge A(\Delta(drp)).
$$
We conclude \eqref{GeoLemmaClaim} after adding these inequalities.

\par Alternatively, let us suppose that one of the inequalities \eqref{TwoGoodDirectionIneq} goes the other way. For example, let's assume 
$$
\overset{\frown}{as}\ge \overset{\frown}{cr}\quad \text{and}\quad \overset{\frown}{sb}\le \overset{\frown}{rd}.
$$
In view of \eqref{TotalLengthIneq}, we have 
$$
\overset{\frown}{as}-\overset{\frown}{cr}\ge \overset{\frown}{rd}-\overset{\frown}{sb}\ge 0.
$$
By part $(ii)$ of Lemma \ref{GeometricLemma},
$$
A(\Delta(asq))-A(\Delta(crp))\ge A(\Delta(drp))-A(\Delta(bsq)).
$$ 
This inequality is equivalent to \eqref{GeoLemmaClaim}. As a result, we have verified inequality  \eqref{GeoLemmaClaim}
in all cases and have therefore proved the Blaschke--Lebesgue theorem. 
\end{proof}
\appendix 
\bibliography{BLbibRev}{}

\bibliographystyle{plain}

\typeout{get arXiv to do 4 passes: Label(s) may have changed. Rerun}

\end{document}